\newtheorem{theorem}[subsection]{Theorem}
\newtheorem{proposition}[subsection]{Proposition}
\newtheorem{corollary}[subsection]{Corollary}
\theoremstyle{definition}
\newtheorem{remark}[subsection]{Remark}
\numberwithin{equation}{subsection}
\newcommand{\sF}{\mathscr{F}}
\newcommand{\ra}{\rightarrow}
\newcommand{\bA}{\mathbb A}
\begin{document}

\title{semi-continuity for total dimension divisors of \'etale sheaves}

\author{Haoyu Hu}
\address{Graduate School of Mathematical Sciences, the University of Tokyo, 3-8-1 Komaba Meguro-ku Tokyo 153-8914, Japan}
\email{huhaoyu@ms.u-tokyo.ac.jp, huhaoyu1987@gmail.com}

\author{Enlin Yang}
\address{FU Berlin,  Mathematik und Informatik, Institut f\"{u}r Mathematik, Arnimallee 6, 14195 Berlin, Germany}
\email{ enlin.yang@fu-berlin.de, yangenlin0727@126.com}



\subjclass[2000]{Primary 14F20; Secondary 11S15}



\keywords{Semi-continuity, ramification, total dimension divisors}

\begin{abstract}
In this article, we extend an inequality that compares the pull-back of the total dimension divisor of an \'etale sheaf and the total dimension divisor of the pull-back of the sheaf due to Saito. Using this formula, we generalize Deligne and Laumon's lower semi-continuity property for Swan conductors of \'etale sheaves on relative curves to higher relative dimensions in a geometric situation.
\end{abstract}
\maketitle

\tableofcontents

\section{Introduction}
\subsection{}\label{delignelaumon}
Let $S$ be an excellent noetherian scheme, $f:X\rightarrow S$ a separated and smooth morphism of relative dimension $1$, $D$ a closed subset of $X$ which is finite and flat over $S$, $U$ the complement of $D$ in $X$ and $j:U\rightarrow X$ the canonical injection. Let $\ell$ be a prime number invertible in $S$ and $\mathscr F$ a locally constant and constructible sheaf of $\mathbb F_{\ell}$-modules on $U$ of constant rank. For any point $ s\in S$, we denote by $\bar s\rightarrow S$ an algebraic geometric point above $s$ (cf. \ref{Notgeom}) and by $X_{\bar s}$ and $D_{\bar s}$ the fibers of $f:X\rightarrow S$ and $f|_D:D\rightarrow S$ at $\bar s$, respectively. For each point $x\in D_{\bar s}$, we define
\begin{equation}\label{dimtotcurve}
\mathrm{dimtot}_x(j_!\mathscr F|_{X_{\bar s}})=\mathrm{Sw}_x(j_!\mathscr F|_{X_{\bar s}})+\mathrm{rank}(\mathscr F),
\end{equation}
where $\mathrm{Sw}_x(j_!\mathscr F|_{X_{\bar s}})$ denotes the classical Swan conductor of the sheaf $j_!\mathscr F|_{X_{\bar s}}$ at $x$ which is an integer number \cite[19.3]{linrep}. The sum
\begin{equation}\label{introsumdimtot}
\sum_{x\in D_{\bar s}}\mathrm{dimtot}_x(j_!\mathscr F|_{X_{\bar s}})
\end{equation}
does not depend on the choice of $\bar s$ above $s$. It defines a function $\varphi:S\rightarrow \mathbb Z$. The following property of $\varphi$ is due to Deligne and Laumon:

\begin{theorem}[{\cite[2.1.1]{lau}}]\label{themdelignelaumon}
We take the notation and assumptions of \ref{delignelaumon}. Then,
\begin{itemize}
\item[(1)]
The function $\varphi:S\rightarrow \mathbb Z$ is constructible and lower semi-continuous on $S$.
\item[(2)]
If $\varphi:S\rightarrow \mathbb Z$ is locally constant, then $f:X\rightarrow S$ is universally locally acyclic with respect to $j_!\mathscr F$.
\end{itemize}
\end{theorem}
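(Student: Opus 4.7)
The strategy is to reduce to the case where $S$ is the spectrum of a strictly henselian discrete valuation ring, and then to compare Euler-Poincar\'e characteristics on the generic and special fibres by means of nearby cycles. Constructibility of $\varphi$ follows from Noetherian approximation together with standard constancy results for Swan conductors along a suitable stratification of $S$. Since a constructible $\mathbb Z$-valued function on a noetherian scheme is lower semi-continuous if and only if $\varphi(\eta) \leq \varphi(s)$ for every specialization $\eta \rightsquigarrow s$, passing to a strict henselization reduces (1) to the case where $S = \spec A$ is a strictly henselian trait with generic point $\eta$ and closed point $s$.

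On such a trait I would pick a smooth proper compactification $\bar f : \bar X \ra S$ of $f$ and enlarge $D$ to $D' = D \sqcup (\bar X \setminus X)$, still finite flat over $S$. Writing $j': U\hookrightarrow \bar X$ for the extended immersion and noting that $\mathscr F$ is lisse in a neighbourhood of $\bar X \setminus X$, the total-dimension contributions at the new boundary points are locally constant in the fibre, so lower semi-continuity of $\varphi$ is equivalent to that of the analogous function attached to $(\bar f, D', j'_!\mathscr F)$. The Grothendieck-Ogg-Shafarevich Euler-Poincar\'e formula, applied to the smooth proper curves $\bar X_{\bar s}$ and $\bar X_{\bar \eta}$ of the same arithmetic genus and common generic rank, yields
\[
\chi(\bar X_{\bar \eta}, j'_!\mathscr F|_{\bar X_{\bar \eta}}) - \chi(\bar X_{\bar s}, j'_!\mathscr F|_{\bar X_{\bar s}}) = \sum_{x \in D'_{\bar s}} \mathrm{dimtot}_x(j'_!\mathscr F|_{\bar X_{\bar s}}) - \sum_{y \in D'_{\bar \eta}} \mathrm{dimtot}_y(j'_!\mathscr F|_{\bar X_{\bar \eta}}).
\]

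On the other hand, since $\bar f$ is proper, the nearby/vanishing cycles triangle combined with proper base change gives
\[
\chi(\bar X_{\bar \eta}, j'_!\mathscr F|_{\bar X_{\bar \eta}}) - \chi(\bar X_{\bar s}, j'_!\mathscr F|_{\bar X_{\bar s}}) = \sum_{x \in \bar X_{\bar s}} \chi\bigl((R\Phi(j'_!\mathscr F))_{\bar x}\bigr).
\]
The main step is then to prove that each vanishing-cycle Euler characteristic $\chi((R\Phi(j'_!\mathscr F))_{\bar x})$ is nonnegative: on a smooth curve this rests on the fact that $R\Phi(j'_!\mathscr F)_{\bar x}$ is concentrated in a single cohomological degree with nonnegative dimension, a Milnor-type positivity statement originally due to Deligne. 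Combining the two displayed identities then gives $\varphi(s) \geq \varphi(\eta)$, proving (1).

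For (2), if $\varphi$ is locally constant, then after any base change to a trait the sum of nonnegative integers $\sum_x \chi((R\Phi(j_!\mathscr F))_{\bar x})$ must vanish, so each individual term does, forcing $R\Phi(j_!\mathscr F) = 0$ universally. This is precisely the definition of universal local acyclicity of $f$ with respect to $j_!\mathscr F$. The principal obstacle throughout is the pointwise nonnegativity of vanishing-cycle dimensions at individual closed points of the special fibre; once this Milnor-type input is available, the rest is standard manipulation of GOS, proper base change, and passage to traits.
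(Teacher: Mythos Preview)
The paper does not prove this theorem; it is quoted from Laumon \cite[2.1.1]{lau} and used as a black box (notably in the proofs of Propositions \ref{geineqprop1} and \ref{thge}). Your outline is indeed Laumon's original argument, so there is nothing in the present paper to compare against. That said, your sketch contains a genuine sign error and a gap.

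First, the direction of the inequality is reversed. Lower semi-continuity of $\varphi$ on a scheme means $\varphi(\eta)\geq\varphi(s)$ for every specialization $\eta\rightsquigarrow s$, not $\varphi(\eta)\leq\varphi(s)$; this is exactly how the present paper invokes Laumon in \eqref{generalineq1}. Correspondingly, the pointwise bound one needs is $\chi\bigl((R\Phi j'_!\mathscr F)_{\bar x}\bigr)\leq 0$, not $\geq 0$: for $x$ in the special fibre of a smooth relative curve one has $R^i\Psi=0$ for $i\geq 2$, and for $x\in D'_{\bar s}$ both $(j'_!\mathscr F)_{\bar x}$ and $R^0\Psi(j'_!\mathscr F)_{\bar x}$ vanish (the Milnor fibre is connected and meets $D'_{\bar\eta}$), so $R\Phi$ sits purely in degree $1$ and $\chi=-\dim H^1\leq 0$. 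The Artin--Schreier family $\mathscr L_\psi(s/t)$ on $\mathbb G_{m,t}\times\mathbb A^1_s$ already has $\varphi(\eta)=2>1=\varphi(0)$, contradicting your claimed inequality and your claimed positivity of $\chi(R\Phi)$.

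Second, the assertion that the contribution at the added boundary $\bar X\setminus X$ is locally constant is unjustified and in general false (Artin--Schreier examples again), so the global comparison of $\varphi'$ does not by itself yield the inequality for $\varphi$. The correct fix is that the vanishing-cycle estimate is already \emph{local} at each $x\in D_{\bar s}$: only points of $D_{\bar\eta}$ (not of the boundary) specialise to such an $x$, and the local identity
\[
-\chi\bigl((R\Phi j'_!\mathscr F)_{\bar x}\bigr)=\sum_{y\mapsto x}\mathrm{dimtot}_y(j'_!\mathscr F|_{\bar X_{\bar\eta}})-\mathrm{dimtot}_x(j'_!\mathscr F|_{\bar X_{\bar s}})\geq 0
\]
gives the pointwise, hence global, inequality for $\varphi$ without any comparison of boundary terms. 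With these two corrections your plan becomes Laumon's proof.
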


\subsection{}
Assume that $f:X\rightarrow S$ has relative dimension $\geq1$, and $D$ is a Cartier divisor on $X$ relative to $S$. For each algebraic geometric point $\bar s$ of $S$, the ramification of $\mathscr F|_{U_{\bar s}}$ along $D_{\bar s}$ defines a divisor supported on $D_{\bar s}$ which is called the {\it total dimension divisor} of $\mathscr F|_{U_{\bar s}}$ on $X_{\bar s}$ (cf. \eqref{introdefdimtot}). It is a generalization of the sum \eqref{introsumdimtot}. This article is devoted to proving the semi-continuity of the total dimension divisor of \'etale sheaves in a geometric setting, that generalizes the property (1) of theorem \ref{themdelignelaumon} (cf. theorem \ref{special statement} and theorem \ref{thscdiv}).

\subsection{}
Let $K$ be a complete discrete valuation field, $\mathscr O_K$ its integer ring and $F$ the residue field of $\mathscr O_K$. We assume that the characteristic of $F$ is $p>0$. Let $\overline K$ be a separable closure of $K$ and we denote by $G_K$ the Galois group of $\overline K/K$. Abbes and Saito defined a decreasing filtration $G^r_K$ $(r\in \mathbb Q_{\geq 1})$ of $G_K$, called the ramification filtration \cite{as1,as2}. We fix a prime number $\ell$ different from $p$. Let $M$ be a finitely generated $\mathbb F_{\ell}$-module on which the wild inertia subgroup of $G_K$ acts through a finite quotient, $M=\bigoplus_{r\in \mathbb Q}M^{(r)}$ the slope decomposition of $M$ (cf. \eqref{slopedecom}) and
\begin{equation}\label{introdtmod}
\mathrm{dimtot}_KM=\sum_{r\geq 1}r\cdot\dim_{\mathbb F_{\ell}}M^{(r)}
\end{equation}
the {\it total dimension} of $M$ (cf. \eqref{dtofmod}). If the residue field $F$ is perfect, then the ramification filtration coincides with the canonical upper numbering filtration shifted by one \cite[3.7]{as1} and the total dimension of $M$ is the sum of the classical Swan conductor of $M$ and the dimension of $M$ over $\mathbb F_{\ell}$.

\subsection{}\label{YEW}
In the following, let $k$ be an algebraically closed field of characteristic $p>0$. Let $Y$ be a smooth $k$-scheme, $E$ a reduced effective Cartier divisor on $Y$, $\{E_i\}_{i\in I}$ the set of irreducible components of $E$, $W$ the complement of $E$ in $Y$, $h:W\rightarrow Y$ the canonical injection and $\mathscr G$ a locally constant and constructible sheaf of $\mathbb F_{\ell}$-modules on $W$. Let $\xi_i$ be the generic point of $E_i$, $\bar \xi_i$ a geometric point above $\xi_i$, $\eta_i$ the generic point of the strict localization $Y_{(\bar\xi_i)}$ and $\bar\eta_i$ a geometric point above $\eta_i$. The restriction $\mathscr G|_{\eta_i}$ is associated to a finite $\mathbb F_{\ell}$-module with a continuous $\mathrm{Gal}(\bar\eta_i/\eta_i)$-action.  We denote by $\mathrm{dimtot}_{E_i}(h_!\mathscr G)$ the total dimension of $\mathscr G|_{\eta_i}$ \eqref{introdtmod} and, following \cite[Definition 3.5]{wr}, define the {\it total dimension divisor} of $\mathscr G$ by the formula
\begin{equation}\label{introdefdimtot}
\mathrm{DT}_{Y}(h_!\mathscr G)=\sum_{i\in I}\mathrm{dimtot}_{E_i}(h_!\mathscr G)\cdot E_i.
\end{equation}
It is a divisor with integral coefficients (\cite[4.4.3]{xiao} and \cite[Proposition 3.10]{wr}).

\subsection{}\label{nothigherdim}
In the following, let $S$ be an irreducible $k$-scheme of finite type, $f:X\rightarrow S$ a separated and smooth $k$-morphism of finite type, $D$ an effective Cartier divisor on $X$ relative to $S$, $U$ the complement of $D$ in $X$ and $j:U\rightarrow X$ the canonical injection. For each point $s\in S$, we denote by $\bar s\rightarrow S$ an algebraic geometric point above $s$ and by $X_{\bar s}$ (resp. $D_{\bar s}$) the fiber of $f:X\rightarrow S$ (resp. the fiber of $f|_D:D\rightarrow S$) at $\bar s$. We assume that $f|_D:D\rightarrow S$ is smooth and that, for each geometric point $\bar s\rightarrow S$, the fiber $D_{\bar s}$ is irreducible.  Let  $\mathscr F$ be a locally constant and constructible sheaf of $\mathbb F_{\ell}$-modules on $U$.  For any point $s\in S$, the total dimension number $\mathrm{dimtot}_{D_{\bar s}}(j_!\mathscr F|_{X_{\bar s}})$ is independent of the choice of $\bar s$. It defines a function $\psi:S\rightarrow \mathbb Z$.

\begin{theorem}\label{special statement}
Under the assumptions of \ref{nothigherdim}, the function $\psi:S\rightarrow \mathbb Z$ is constructible and lower semi-continuous on $S$.
\end{theorem}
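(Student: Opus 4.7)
The strategy is to deduce Theorem \ref{special statement} from the extended Saito inequality announced as the main technical result of this paper, combined with a generic-constancy argument and noetherian induction on $\dim S$. Since $f|_D : D \to S$ is smooth with geometrically irreducible fibers and $S$ is irreducible, the divisor $D$ is itself irreducible; denote its generic point by $\eta_D$ and set $N := \mathrm{dimtot}_{\eta_D}(j_!\mathscr F)$. Flatness of $D$ over $S$ implies that for every geometric point $\bar s \to S$, the pull-back of the Cartier divisor $N \cdot D$ via the closed immersion $i_{\bar s} : X_{\bar s} \hookrightarrow X$ equals $N \cdot D_{\bar s}$.

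The first step is to apply the extended Saito inequality to $i_{\bar s}$, obtaining $i_{\bar s}^{*}\,\mathrm{DT}_X(j_!\mathscr F) \geq \mathrm{DT}_{X_{\bar s}}(j_!\mathscr F|_{X_{\bar s}})$, which simplifies to $\psi(s) \leq N$ for every $s \in S$. The second step is to establish generic constancy: there is a dense open $V \subseteq S$ on which $\psi \equiv N$. The natural route is a transverse-curve argument. After shrinking $S$, one constructs a smooth family $\mathfrak C \subseteq X$ of curves over an open $V_0 \subseteq S$ meeting $D$ transversally along a section $\sigma : V_0 \to \mathfrak C \cap D$ that is finite flat over $V_0$. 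Applying Theorem \ref{themdelignelaumon} to $\mathfrak C \to V_0$ with divisor $\sigma(V_0)$ gives a constructible and lower semi-continuous function $\varphi_{\mathfrak C} : V_0 \to \mathbb Z$ whose value at the generic point is $N$ by the standard comparison of the total dimension along a smooth divisor with the Swan conductor on a transverse curve. Constructibility plus lower semi-continuity then force $\varphi_{\mathfrak C} \equiv N$ on a dense open $V \subseteq V_0$; combined with the comparison $\varphi_{\mathfrak C}(t) \leq \psi(t) \leq N$, this gives $\psi \equiv N$ on $V$.

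With these two inputs we conclude by noetherian induction on $\dim S$. On $V$, $\psi \equiv N$; restricting $(X, D, \mathscr F)$ to each irreducible component of $S \setminus V$ (after a further stratification, if necessary, to re-establish the irreducibility of $D_{\bar s}$), the inductive hypothesis yields constructibility and lower semi-continuity of $\psi|_{S \setminus V}$, so constructibility of $\psi$ on $S$ follows. For lower semi-continuity: $\{\psi > n\} = \emptyset$ for $n \geq N$, and for $n < N$ we have $\{\psi \leq n\} = \{\psi|_{S \setminus V} \leq n\}$, which is closed in $S \setminus V$ by induction and hence closed in $S$.

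The principal obstacle is the first step: the extended Saito inequality itself, whose proof forms the central technical contribution of the paper and requires substantial ramification-theoretic machinery (Abbes--Saito filtrations, blow-ups, alterations) to accommodate the fact that the base $S$ need not be smooth over $k$. A secondary subtlety is the generic-constancy step: the construction of a genuinely transverse family of curves over a dense open of $S$, and the precise verification of the curve-to-divisor comparison, require care (and may be bypassed by invoking the equality case of the extended Saito inequality at a sufficiently generic fiber).
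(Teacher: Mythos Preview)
Your strategy is correct and matches the paper's closely: an upper bound for $\psi$ by its generic value, generic constancy via a transverse-curve family and Deligne--Laumon, then noetherian induction. The paper packages the first two steps as Propositions~\ref{geineqprop1} and~\ref{geineqprop2}, assembles them into Theorem~\ref{thscdiv}, and your Theorem~\ref{special statement} follows from \ref{thscdiv} by exactly the induction you describe.

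The one place you diverge is Step~1. You obtain $\psi(s)\le N$ by applying Theorem~\ref{mainthmResults} directly to the fibre inclusion $i_s\colon X_s\hookrightarrow X$ (after an alteration making $S$, hence $X$, smooth over $k$). The paper instead proves Proposition~\ref{geineqprop1} by a curve-fibration argument using only Proposition~\ref{Results-1} and Deligne--Laumon, never invoking the higher-dimensional pull-back inequality. Your route is shorter, but two small points need care. First, Theorem~\ref{mainthmResults} requires source and target to be smooth $k$-schemes, so it literally applies only at \emph{closed} points $s$ (where $X_s$ is a $k$-scheme), not at all $s$ as you write; this is harmless for the induction but should be said. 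Second, Theorem~\ref{mainthmResults} compares against $\mathrm{DT}_X(j_!\mathscr F)$, whose coefficient $N$ is computed from the local ring of $X$ at the generic point of $D$, whereas $\psi(\eta)$ is computed on the geometric generic fibre $X_{\bar\eta}$. The identification $N=\psi(\eta)$ holds but is not tautological, and the paper sidesteps it entirely by working with the fibrewise invariant $\mathrm{GDT}_f$ rather than $\mathrm{DT}_X$. The same issue reappears in your Step~2 when you claim $\varphi_{\mathfrak C}(\eta)=N$: Saito's equality~\eqref{pullbacktocurveundernonchar} is for $k$-curves in a smooth $k$-scheme, so the honest argument is to get $\varphi_{\mathfrak C}(t)=N$ at a generic \emph{closed} $t$ where $C_t$ is $SS(j_!\mathscr F)$-transversal in $X$, and then transport this to $\eta$ by constructibility. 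This is precisely the manoeuvre in the paper's proof of Proposition~\ref{thge}.
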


Confer to theorem \ref{thscdiv} for a more general statement.

\subsection{}
The strategy to prove theorem \ref{special statement} starts from an inequality due to Saito that compares the pull-back of the total dimension divisor of an \'etale sheaf and the total dimension divisor of the pull-back of the sheaf (cf. \ref{saitopbformdt}). We take again the notation and assumptions of \ref{YEW}. Let $C$ be a smooth $k$-curve and $\iota:C\rightarrow Y$ an immersion. We assume that $C$ intersects $E$ properly at a closed point $y\in Y$, that $E$ is smooth at $y$ and that the ramification of $\mathscr G$ is non-degenerate at $y$ \cite[Definition 3.5]{wr} (cf. \ref{recallnd}). Then, we have \cite[Proposition 3.9]{wr}
\begin{equation}\label{intropullback}
(\mathrm{DT}_{Y}(h_!\mathscr G),C)_y\geq\mathrm{dimtot}_y(h_!\mathscr G|_C).
\end{equation}
Moreover, if $\iota:C\rightarrow Y$ satisfies a certain geometric condition (cf. \ref{saitopbformdt}), we have \cite[Proposition 3.9]{wr}
\begin{equation}\label{intropullbackeq}
(\mathrm{DT}_{Y}(h_!\mathscr G),C)_y=\mathrm{dimtot}_y(h_!\mathscr G|_C).
\end{equation}

Firstly, we extend the inequality \eqref{intropullback}. More precisely, we prove that \eqref{intropullback} holds
without assuming that $E$ is smooth at $y$ and that the ramification of $\mathscr G$ is non-degenerate at $y$ (proposition \ref{Results-1}). After that, using the generalized inequality \eqref{intropullback} and the equality \eqref{intropullbackeq}, we reduce theorem \ref{special statement} to a relative curve case. Using theorem \ref{themdelignelaumon}, we deduce  the following two properties of $\psi:S\rightarrow \mathbb Z$:
\begin{itemize}
\item[(1)]
For any closed point $t\in S$, we have $\psi(t)\leq\psi(\eta)$, where $\eta$ denotes the generic point of $S$ (more generally, cf. proposition \ref{geineqprop1}).
\item[(2)]
There exists an open dense subset $V\subset S$ such that, for any closed point $t\in S$, we have $\psi(t)\geq\psi(\eta)$ (more generally, cf. proposition \ref{geineqprop2}).
\end{itemize}
Finally, we prove that theorem \ref{special statement} is a consequence of properties (1) and (2) (cf.  \ref{final proof}).

\subsection{}
This article is organized as follows. We briefly recall Abbes  and Saito's ramification theory and mention some geometric invariants of \'etale sheaves in \S 3. We formulate our main results in \S 4. After presenting some useful geometric preliminaries in \S 5, we prove various generalizations of inequality \eqref{intropullback} (proposition \ref{Results-1} and theorem \ref{mainthmResults}) in \S 6 and \S 7. The last section is devoted to the proof of theorem \ref{special statement} (cf. theorem \ref{thscdiv}).

\subsection*{Acknowledgement}
The authors would like to express their gratitude to professors A. Abbes, H. Esnault, L. Fu and T. Saito for their inspiring discussion and stimulating suggestions. The first author is supported by JSPS postdoctoral fellowship during his stay at the University of Tokyo and the second author is partially supported by Alexander von Humboldt Foundation for his research at Freie Universit\"at Berlin. The authors are grateful to these institutions.

\section{Notation}
\subsection{}
In this article, $k$ denotes an algebraically closed field of characteristic $p>0$. We fix a prime number $\ell$ which is different from $p$ and a finite field $\Lambda$ of characteristic $\ell$. All $k$-schemes are assumed to be separated and of finite type over $\mathrm{Spec}(k)$ and all morphisms between $k$-schemes are assumed to be $k$-morphisms.

\subsection{}\label{Notgeom}
Let $x$ be a point of a scheme $X$. An algebraic geometric point $\bar x\ra X$ above $x$ is a morphism $\mathrm{Spec} (\overline{k(x)})\ra X$ that factors through the canonical map $\mathrm{Spec} (k(x))\ra X$, where $\overline{k(x)}$ is an algebraic closure of $k(x)$.

\subsection{}\label{fiberschdiv}
Let $f:X\rightarrow S$ be a morphism of schemes,  $s\in S$ and $\bar s\rightarrow S$ a geometric point above $s$. We denote by $X_s$ (resp. $X_{\bar s}$) the fiber of $f:X\rightarrow S$ at $s$ (resp. the fiber of $f:X\rightarrow S$ at $\bar s$).

For any morphism $\pi:S'\rightarrow S$, we put $X'=X\times_SS'$ and put $\pi':X'\rightarrow X$ the canonical projection. Assume that $f:X\rightarrow S$ is flat and of finite presentation. Let $D$ be a Cartier divisor on $X$ relative to $S$ \cite[IV, 21.15.2]{EGA4}. We denote by $\pi'^*D$ the pull-back of $D$, which is a Cartier divisor  on $X'$ relative to $S'$  \cite[IV, 21.15.9]{EGA4}. When $S'$ is a point $s'$ of $S$, we simply put $D_{s'}=\pi'^*D$.

\subsection{}
Let $X$ be a smooth scheme over a field. We denote by $\mathbb TX$ the tangent bundle of $X$ and by $\mathbb T^*X$ the cotangent bundle of $X$. For $x\in X$, we put $\mathbb T_xX=\mathbb TX\times_Xx$ and $\mathbb T^*_xX=\mathbb T^*X\times_Xx$.

\subsection{}\label{deftransversal}
Let $\kappa$ be a field, $X$ be a smooth $\kappa$-scheme and $C$ a closed conical subset in $\mathbb T^*X$ \cite[1.0]{beilinson15}.

Let $h:Y\rightarrow X$ be a $\kappa$-morphism of smooth $\kappa$-schemes. We say that $h\colon Y\rightarrow X$  is  \emph{$C$-transversal at a point} $y\in Y$ if $(\ker(dh_y)\cap C)\times_X h(y)=\{0\}\subseteq \mathbb T^*_{h(y)}X$, where $dh_y: \mathbb T^*_{h(y)}X\rightarrow \mathbb T^*_{y}Y$ is the canonical map. We say that  $h\colon Y\rightarrow X$  is  \emph{$C$-transversal} if it is $C$-transversal at every point of $Y$. If $h:Y\rightarrow X$ is $C$-transversal, we define $h^{\circ} C$ to be the image of $Y\times_XC$ in $\mathbb T^*Y$, where $dh: Y\times_X\mathbb T^*X\rightarrow \mathbb T^*Y$ is the canonical map. It is also a closed conical subset of $\mathbb T^*Y$.

Let $f:X\rightarrow W$ be a $\kappa$-morphism of smooth $\kappa$-schemes. We say that $f:X\rightarrow W$ is
\emph{$C$-transversal at a point} $ x\in X$ if $df^{-1}_x(C)=\{0\}\subseteq \mathbb T^*_{f(x)}W$, where $df_x: \mathbb T^*_{f(x)}W\rightarrow \mathbb T^*_xX$ is the canonical map.
We say that $f:X\rightarrow W$ is $C$-{\it transversal} if it is $C$-transversal at every point of $X$.

Let $h:Y\rightarrow X$ and $f:Y\rightarrow W$ be a pair of $\kappa$-morphisms of smooth $\kappa$-schemes. We say that $(h,f)$ is $C$-{\it transversal} if $h:Y\rightarrow X$ is $C$-transversal and $f:Y\rightarrow W$ is $h^{\circ}C$-transversal.

\subsection{}
Let $X$ be a scheme over a field and $D$ and $E$ two Cartier divisors on $X$. We say that $D$ is bigger than $E$ and write $D\geq E$, if $D-E$ is effective.

\subsection{}\label{interproper}
Let $X$ be a scheme over a field of finite type, $f:Y\rightarrow X$ a morphism of finite type and $g:Z\rightarrow X$ a regular immersion of codimension $c$. We say $Z$ {\it intersects}  $Y$ {\it properly} if, for each irreducible component $Y'$ of $Y$,
the fiber product $Z'=Y'\times_XZ$ is equidimensional and the canonical injection $Z'\rightarrow Y'$ is of codimension $c$.

\section{Ramification and geometric invariants of \'etale sheaves}
\subsection{}
Let $K$ be a complete discrete valuation field, $\mathscr O_K$ its integer ring and $F$ the residue field of $\mathscr O_K$. We assume that the characteristic of $F$ is $p>0$. Let $\overline K$ be a separable closure of $K$ and  $G_K$ the Galois group of $\overline K/K$. Abbes and Saito defined a decreasing filtration $G^r_K$ ($ r\in \mathbb Q_{\geq 1}$) of $G_K$ by closed normal subgroups called the ramification filtration \cite[3.1]{as1}. For any $r\geq 1$, we put
\begin{equation*}
G^{r+}_K=\overline{\bigcup_{{s\in \mathbb Q_{>r}}}G^s_K}.
\end{equation*}
The subgroup $G^1_K$ is the inertia subgroup $I_K$ of $G_K$ and $G^{1+}_K$ is the wild inertia subgroup $P_K$ of $G_K$, i.e., the $p$-Sylow subgroup of $I_K$ \cite[3.7]{as1}. If $F$ is perfect, the filtration $\{G^r_K\}_{r\in \mathbb Q_{\geq 1}}$ coincides with the canonical upper numbering filtration shifted by one ({\it loc. cit.}). If $\mathrm{Char}(K)=p$, then, for any $r\in \mathbb Q_{>1}$, the graded piece $G^r_K/G^{r+}_K$ is abelian and killed by $p$ (\cite[2.15]{as2} and \cite[Corollary 2.28]{wr}).

\subsection{}
Let $M$ be a finitely generated $\Lambda$-module with a continuous $P_K$-action. The module $M$ has a unique decomposition \cite[1.1]{katz}
\begin{equation}\label{slopedecom}
M=\bigoplus\limits_{r\geq 1}M^{(r)}
\end{equation}
into $P_K$-stable submodules $M^{(r)}$, such that $M^{(1)}=M^{P_K}$ and for every $r>1$,
\begin{equation*}
(M^{(r)})^{G^r_K}=0\ \ \ {\rm and}\ \ \  (M^{(r)})^{G^{r+}_K}=M^{(r)}.
\end{equation*}
The decomposition \eqref{slopedecom} is called the \emph{slope decomposition} of $M$. The values $r\geq 0$ for which  $M^{(r)}\neq 0$ are called the {\it slopes} of $M$. The \emph{total dimension} of $M$ is defined by
\begin{equation}\label{dtofmod}
\mathrm{dimtot}_K M=\sum_{r\geq 1}r\cdot \mathrm{dim}_{\Lambda}M^{(r)}.
\end{equation}
If the residue field $F$ is perfect, the invariant $\mathrm{dimtot}_K M$ is the classical total dimension of $M$, i.e., $\mathrm{dimtot}_K M=\mathrm{Sw}_KM +\dim_{\Lambda }M$, where $\mathrm{Sw}_KM$ denotes the classical Swan conductor of $M$ \cite[19.3]{linrep}.

\subsection{}\label{gedimtot}
In the following of this section, let $\kappa$ be a field of characteristic $p$.

Let $X$ be a smooth $\kappa$-scheme, $D$ a reduced effective Cartier divisor on $X$, $\{D_i\}_{i\in I}$ the set of irreducible components of $D$, $U$ the complement of $D$ in $X$ and $j:U\rightarrow X$ the canonical injection. We assume that each $D_i$ is generically smooth over $\mathrm{Spec}(\kappa)$. Choose an algebraic closure $\bar\kappa$ of $\kappa$. We denote by $\xi_{i}$ the generic point of an irreducible component of $D_{i,\bar\kappa}=D_i\times_{\kappa}\bar\kappa$, by $\bar\xi_{i}$ a geometric point above $\xi_{i}$, by $\eta_{i}$ the generic point of the strict localization $X_{\bar\kappa,(\bar\xi_{i})}$, by $K_{i}$ the function field of  $X_{\bar\kappa,(\bar\xi_{i})}$ and by $\overline K_i$ a separable closure of $K_i$.
Let $\mathscr F$ be a locally constant and constructible sheaf of $\Lambda$-modules on $U$.
The restriction $\mathscr F|_{\eta_i}$ corresponds to a finitely generated $\Lambda$-module with a continuous $\mathrm{Gal}(\overline K_i/K_i)$-action.
Since the $\mathrm{Gal}(\bar\kappa/\kappa)$-action on the set of irreducible components of $D_{i,\bar\kappa}$ is transitive, the total dimension  $\mathrm{dimtot}_{K_{i}}(\mathscr F|_{\eta_{i}})$ of $\mathscr F|_{\eta_i}$ dose not depend on the choice of $\bar\kappa$ nor on the choice of the irreducible component of $D_{i,\bar\kappa}$. Following \cite[Definition 3.5]{wr}, we define the {\it total dimension divisor} of $j_!\mathscr F$ on $X$ and denote by $\mathrm{DT}_X(j_!\mathscr F)$ the divisor:  \begin{equation}\label{gedtdefinition}
\mathrm{DT}_X(j_!\mathscr F)=\sum_i \mathrm{dimtot}_{K_{i}} (\mathscr F|_{\eta_{i}})\cdot D_i.
\end{equation}
It has integral coefficients (\cite[4.4.3]{xiao} and \cite[Proposition 3.10]{wr}).

Let $\kappa'$ be a finite extension of $\kappa$ and $f:X_{\kappa'}=X\times_{\kappa}\kappa'\rightarrow X$ the canonical projection. By definition, we have
 \begin{equation}\label{basechangetd}
 f^*(\mathrm{DT}_X(j_!\mathscr F))=\mathrm{DT}_{X_{\kappa'}}(f^*j_!\mathscr F).
 \end{equation}

\begin{remark}\label{coefdt}
In the following, under the assumption of \ref{gedimtot}, we denote by $\mathrm{dimtot}_{D_i}(j_!\mathscr F)$ the coefficient of $D_i$ in the total dimension divisor $\mathrm{DT}_X(j_!\mathscr F)$ instead of $\mathrm{dimtot}_{K_i}(\mathscr F|_{\eta_i})$ in \eqref{gedtdefinition}.
\end{remark}

\subsection{}\label{defgdt}
Let $S$ be an irreducible $\kappa$-scheme of finite type, $f:X\rightarrow S$ a smooth $\kappa$-morphism, $\{D_i\}_{i\in I}$ a set of effective Cartier divisors on $X$ relative to $S$, $D$ the sum of all $D_i$ $(i\in I)$, $U$ the complements of $D$ in $X$ and $j:U\rightarrow X$ the canonical injection. For each $i\in I$, we assume that $D_i$ is irreducible as the associated closed subscheme of $X$ and that $f|_{D_i}:D_i\rightarrow S$ is smooth. We denote by $\eta$ the generic point of $S$.
Notice that $D_{i,\eta}$ is an integral effective Cartier divisor on $X_{\eta}$ for each $i\in I$.
Let $\mathscr F$ be a locally constant and constructible sheaf of $\Lambda$-modules on $U$. We define the {\it generic total dimension divisor of} $\mathscr F$ on $X$ and denote by $\mathrm {GDT}_f(j_!\mathscr F)$ the divisor (remark \ref{coefdt})
\begin{equation}\label{gdtdefform}
\mathrm{GDT}_f(j_!\mathscr F)=\sum_{i\in I}\mathrm{dimtot}_{D_{i,\eta}}(j_!\mathscr F|_{X_{\eta}})\cdot D_i.
\end{equation}

Let $S'$ be an irreducible smooth $\kappa$-scheme of finite type and $\pi:S'\rightarrow S$ a dominant and generically finite $\kappa$-morphism. We denote by
$f':X'\rightarrow S'$ (resp. by $\pi':X'\rightarrow X$) the base change of $f:X\rightarrow S$ (resp. $\pi:S'\rightarrow S$). By \eqref{basechangetd}, we have
\begin{equation}\label{finitepbgdt}
\pi'^*(\mathrm{GDT}_f(j_!\mathscr F))=\mathrm{GDT}_{f'}(\pi'^*j_!\mathscr F).
\end{equation}

\subsection{}\label{recallnd}
Let $X$ be a connected smooth $k$-scheme of dimension $n$, $D$ a reduced effective Cartier divisor on $X$, $U$ the complement of $D$ in $X$ and $j:U\rightarrow X$ the canonical injection. Let $\mathscr F$ be a locally constant and constructible sheaf on $U$. After removing a closed subset $Z\subseteq D$ of codimension $2$ in $X$, the complement $D_0=D-Z$ is smooth and the ramification of $\mathscr F$ along $D_0$ is {\it non-degenerate} \cite[Definition 3.1]{wr}. Roughly speaking, the ramification of an \'etale sheaf along a divisor is non-degenerate if its ramification is controlled by the ramification at generic points of the divisor. Saito defined the {\it characteristic cycle} of $j_!\mathscr F$ on the complement $X_0=X-Z$ using Abbes and Saito's ramification theory and denoted by $CC(j_!\mathscr F)$ \cite[Definition 3.5]{wr}. It is an $n$-cycle on the cotangent bundle $\mathbb T^*X_0$. The support of $CC(j_!\mathscr F)$ is called the {\it singular support} of $j_!\mathscr F$ and denoted by $SS(j_!\mathscr F)$. It is a closed conical subset of $\mathbb T^*X_0$. In this case, the singular support $SS(j_!\mathscr F)$ is a union of the zero-section $\mathbb T^*_{X_0}X_0$ and a closed conical subset of $D_0\times_{X_0}\mathbb T^*X_0$ and, for every point $x\in D_0$, the fiber $SS_x(j_!\mathscr F)=SS(j_!\mathscr F)\times_{X}x$ is of dimension $1$ \cite[Proposition 3.12]{wr}.

\subsection{}\label{saitopbformdt}
We take the notation and assumptions of \ref{recallnd}. Let $C$ be a smooth $k$-curve and $g:C\rightarrow X$ an immersion. We assume that $C$ intersects $D$ properly at a closed point $x\in X$, that $D$ is smooth at $x$ and that the ramification of $\mathscr F$ along the divisor $D$ is non-degenerate at $x$. Hence, the singular support of $j_!\mathscr F$ can be defined using ramification theory in a neighborhood of  $x$ (cf. \ref{recallnd}). Then we have \cite[Proposition 3.9]{wr}
\begin{equation}\label{saitopbineq}
(\mathrm{DT}_X(j_!\mathscr F), C)_x\geq\mathrm{dimtot}_x (j_!\mathscr F|_C).
\end{equation}
Moreover, if $g:C\rightarrow X$ is $SS(j_!\mathscr F)$-transversal at $x$ (\ref{deftransversal}), then \cite[Proposition 3.9]{wr}
\begin{equation}\label{pullbacktocurveundernonchar}
(\mathrm{DT}_X(j_!\mathscr F), C)_x=\mathrm{dimtot}_x (j_!\mathscr F|_C).
\end{equation}

\subsection{}\label{transveralandss}
We should mention that the singular support and characteristic cycle of \'etale sheaves can be constructed in a more general setting beyond \cite{wr} (cf. \ref{recallnd}).

Let $X$ be a smooth $\kappa$-scheme and $\mathscr K$ be a complex of \'etale sheaves of $\Lambda$-modules on $X$ with bounded constructible cohomologies. We say that $\mathscr K$ is {\it micro-supported} on a closed conical subset $C\subseteq \mathbb T^*X$ if for every $C$-transveral pair of $\kappa$-morphisms $h:Y\rightarrow X$ and $f:Y\rightarrow W$ (\ref{deftransversal}), the morphism $f:Y\rightarrow W$  is locally acyclic with respect to $h^*\mathscr K$. Beilinson recently proved that there exists a smallest closed conical subset $C\subseteq \mathbb T^*X$ on which $\mathscr K$ is micro-supported \cite[Theorem 1.2]{beilinson15}. It is called the {\it singular support} of $\mathscr K$ and denoted by $SS(\mathscr K)$. If $X$ is connected, each irreducible component of $SS(\mathscr K)$ has dimension $\dim_{\kappa}X$ ({\it loc. cit.}).

Assume that $\kappa$ is perfect and that $X$ is connected of dimension $n$. Saito recently defined an $n$-cycle in $\mathbb T^*X$ supported on the singular support of $\mathscr K$, which is characterized by a Milnor formula \cite[Theorem 3.6]{2015}. It is called the {\it characteristic cycle} of $\mathscr K$ and denoted by $CC(\mathscr K)$. The intersection of $CC(\mathscr K)$ and the zero-section of $\mathbb T^*X$ computes the Euler-Poincar\'e characteristic of $\mathscr K$ \cite[Theorem 4.21]{2015}.

\section{Main results}

\subsection{}\label{xdf}
Let $X$ be a smooth $k$-scheme, $D$ a reduced effective Cartier divisor on $X$, $U$ the complement of $D$ in $X$, $j:U\rightarrow X$ the canonical injection and $\mathscr F$ a locally constant and constructible sheaf of $\Lambda$-module.

\begin{proposition}\label{Results-1}
We take the notation and assumptions of \ref{xdf}. Let $C$ be a smooth $k$-curve and $i:C\rightarrow X$ an immersion. We assume that $C$ intersects $D$ properly at a closed point $x\in X$.  Then, we have
\begin{equation}\label{mainineq}
  (\mathrm{DT} (j_!\sF),C)_x\geq \mathrm{dimtot}_x(j_!\sF|_C).
 \end{equation}
\end{proposition}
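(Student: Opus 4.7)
The plan is to reduce the assertion to the known inequality \eqref{saitopbineq} by constructing a suitable proper birational modification $\pi\colon X'\to X$ that resolves both the singularities of $D$ above $x$ and the degeneracy of the ramification of $\sF$ there, and then descending the resulting inequality back to $X$ via a projection formula. After shrinking $X$ we may assume that $x$ is the only point of $C\cap D$.

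\textbf{Construction of $\pi$.} By embedded resolution of singularities applied to the pair $(X,D)$, followed by further blowups above $x$ to clean up the ramification, one builds a smooth $k$-scheme $X'$ and a proper birational morphism $\pi\colon X'\to X$ that is an isomorphism over $X\setminus\{x\}$, such that $D'=\pi^{-1}(D)_{\mathrm{red}}$ is a simple normal crossing divisor in a neighborhood of $\pi^{-1}(x)$ and the ramification of $\pi^{*}\sF$ is non-degenerate along each irreducible component of $D'$ passing through each point of $\pi^{-1}(x)$. The existence of such ``clean'' blowups uses the fact, recalled in \ref{recallnd}, that the degenerate locus on each component of $D'$ has codimension $\geq 2$ and can be eliminated by blowing up smooth centers.

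\textbf{Applying the known inequality on $X'$.} Since $C$ is smooth, the strict transform $C'\subset X'$ is isomorphic to $C$ via $\pi$, and $C'\to X'$ is again an immersion. The scheme $C'\cap D'$ is concentrated at a single point $y\in C'$ above $x$, determined by the tangent direction of $C$ at $x$. At $y$, the divisor $D'$ is SNC and the ramification of $\pi^{*}\sF$ is non-degenerate along every component of $D'$ through $y$, so applying \eqref{saitopbineq} componentwise yields
\begin{equation*}
  (\mathrm{DT}_{X'}(j'_!\pi^{*}\sF),C')_{y}\;\geq\;\mathrm{dimtot}_{y}(j'_!\pi^{*}\sF|_{C'}),
\end{equation*}
where $j'\colon X'\setminus D'\hookrightarrow X'$. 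The isomorphism $\pi\colon C'\xrightarrow{\sim}C$ sending $y$ to $x$ identifies $\pi^{*}\sF|_{C'}$ with $\sF|_C$, so the right-hand side equals $\mathrm{dimtot}_{x}(j_!\sF|_C)$.

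\textbf{Descent to $X$ and main obstacle.} The projection formula gives
\begin{equation*}
  (\pi^{*}\mathrm{DT}_X(j_!\sF),C')_{y}\;=\;(\mathrm{DT}_X(j_!\sF),C)_{x},
\end{equation*}
so it suffices to establish the divisorial inequality $\pi^{*}\mathrm{DT}_X(j_!\sF)\geq \mathrm{DT}_{X'}(j'_!\pi^{*}\sF)$ in a neighborhood of $\pi^{-1}(x)$. The substantive content is concentrated at the exceptional components of $\pi$: the pullback coefficient along an exceptional $E_k$ equals $\sum_i n_{k,i}\,\mathrm{dimtot}_{D_i}(j_!\sF)$, where $n_{k,i}$ is the multiplicity of $E_k$ in $\pi^{*}D_i$, whereas the $\mathrm{DT}_{X'}$-coefficient along $E_k$ is the intrinsic total dimension of $\pi^{*}\sF$ at the generic point of $E_k$. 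Bounding the latter by the former is a semicontinuity-type statement for the total dimension under the extensions of complete discrete valuation fields with possibly imperfect residue field induced by $\pi$; this is the main ramification-theoretic obstacle, and with it in hand the remaining combinatorics complete the proof.
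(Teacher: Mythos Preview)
Your plan has a genuine circularity. The ``semicontinuity-type statement'' you isolate as the main obstacle --- namely that $\pi^{*}\mathrm{DT}_X(j_!\sF)\geq \mathrm{DT}_{X'}(j'_!\pi^{*}\sF)$ for a proper birational $\pi$ --- is exactly Theorem~\ref{mainthmResults} of the paper (and its special case Corollary~\ref{dtbth} for blow-ups at a point), and the paper deduces that theorem \emph{from} the very proposition you are trying to prove. So unless you have an independent argument for this pull-back inequality, you are assuming what is to be shown. There are also two subsidiary gaps: (i) the existence of a modification making the ramification of $\pi^{*}\sF$ non-degenerate along \emph{all} exceptional components over $x$ is not supplied by \ref{recallnd}, which only guarantees non-degeneracy away from a codimension-two locus on a fixed scheme, not that blow-ups eventually eliminate degeneracy; and (ii) the known inequality \eqref{saitopbineq} requires $D$ to be \emph{smooth} (not merely SNC) at the point in question, so ``applying it componentwise'' when several branches of $D'$ pass through $y$ is not well-defined: the sheaf lives on the complement of all of $D'$, not of a single component.

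The paper's own argument avoids resolution entirely and proceeds by a completely different mechanism. One chooses local coordinates so that $C$ becomes a fiber of a smooth map $f\colon X\to\mathbb A^{n-1}_k$, with $f|_D$ quasi-finite and flat. After an \'etale base change on the target, the divisor $D$ is finite flat over the base, and one picks a curve $B$ in the base through the image of $x$ which, away from that image, meets only the locus over which $D$ is smooth and the ramification of $\sF$ is non-degenerate. On each generic fiber $g^{-1}(b)$ over $b\in B\setminus\{O'\}$ one can invoke Saito's inequality \eqref{saitopbineq}; the left-hand side is constant in $b$ because the total dimension divisor is supported on a finite flat scheme, while the right-hand side is lower semi-continuous in $b$ by Deligne--Laumon (Theorem~\ref{themdelignelaumon}). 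Passing to the special fiber $b=O'$ yields the inequality at $x$. In short, the paper replaces your spatial resolution by a one-parameter deformation argument whose limiting step is controlled by the classical semi-continuity on relative curves.
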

This proposition is a generalization of  \eqref{saitopbineq}. More generally, we have the following theorem~:

\begin{theorem}\label{mainthmResults}
We take the notation and assumptions of \ref{xdf}. Let $Y$ be a smooth $k$-scheme and $f:Y\ra X$ a $k$-morphism such that $Y\times_XD$ is an effective Cartier divisor on $Y$. Then, we have
 \begin{equation}\label{high}
    f^*(\mathrm{DT}_X (j_!\mathscr F))\geq \mathrm{DT}_Y (f^*j_!\sF).
  \end{equation}
\end{theorem}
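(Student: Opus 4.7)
The inequality \eqref{high} is between two effective divisors on $Y$, both supported on $(Y\times_XD)_{\mathrm{red}}$, so it suffices to compare their multiplicities on each irreducible component. Fix such a component $E$, write $A=\mathrm{mult}_E\bigl(f^*\mathrm{DT}_X(j_!\sF)\bigr)$ and $B=\mathrm{mult}_E\bigl(\mathrm{DT}_Y(f^*j_!\sF)\bigr)=\mathrm{dimtot}_E(f^*j_!\sF)$; the goal is $A\geq B$. The plan is to apply Proposition~\ref{Results-1} on $X$ and the equality \eqref{pullbacktocurveundernonchar} on $Y$ to one and the same smooth curve cut through a carefully chosen closed point $y\in E$.

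By \ref{recallnd}, the ramification of $f^*\sF$ along $E$ is non-degenerate outside a codimension-two closed subset, so $SS(f^*j_!\sF)$ is defined near the generic point of $E$. Choose $y\in E$ in the complement of: the singular locus of $E$, the intersection of $E$ with the other components of $(Y\times_XD)_{\mathrm{red}}$, the degenerate-ramification locus, and the locus where $\mathrm{rank}(df)$ drops below its generic value on $E$. In $\mathbb T_yY$, pick a line $\ell$ avoiding the three proper closed conical subsets $\mathbb T_yE$, $\ker(df_y)$ and $SS_y(f^*j_!\sF)$; such $\ell$ exists because $k$ is infinite. Realize $\ell$ as the tangent line at $y$ of a smooth closed immersion $g\colon C\hookrightarrow Y$ (using local \'etale coordinates at $y$), and shrink $C$ so that $C\cap (Y\times_XD)_{\mathrm{red}}=\{y\}$. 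Since $C\setminus\{y\}\subset f^{-1}(U)$, the composition $i:=f\circ g\colon C\to X$ has injective differential at $y$, hence is an immersion near $y$; it does not factor through $D$ and meets $D$ properly at $i(y)$.

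Because $C$ is transversal to $E$ at $y$ and no other component of $(Y\times_XD)_{\mathrm{red}}$ passes through $y$,
\[
A=(f^*\mathrm{DT}_X(j_!\sF),C)_y,\qquad B=(\mathrm{DT}_Y(f^*j_!\sF),C)_y.
\]
On $Y$, the equality \eqref{pullbacktocurveundernonchar} applies—since $g$ is $SS(f^*j_!\sF)$-transversal at $y$ and the ramification there is non-degenerate—giving $B=\mathrm{dimtot}_y((f^*j_!\sF)|_C)$. On $X$, Proposition~\ref{Results-1} applied to the immersion $i\colon C\to X$ yields
\[
A=(\mathrm{DT}_X(j_!\sF),C)_{i(y)}\geq \mathrm{dimtot}_{i(y)}(j_!\sF|_C)=\mathrm{dimtot}_y((f^*j_!\sF)|_C)=B,
\]
which is the desired inequality.

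The hard step is the simultaneous realization of the three transversality conditions by a single curve $C$ at a point where the ramification is non-degenerate, and the production of a smooth Zariski-local curve realizing a prescribed tangent line. Each constraint is generic in nature, but combining them—and verifying that $f\circ g$ is an honest immersion rather than merely unramified near $y$ (achieved by shrinking $C$ after injectivity of $di_y$)—requires some careful bookkeeping.
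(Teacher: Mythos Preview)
Your strategy---cut by a curve $C\subset Y$ that is $SS(f^*j_!\sF)$-transversal, then combine \eqref{pullbacktocurveundernonchar} on $Y$ with Proposition~\ref{Results-1} on $X$---is exactly the paper's, and the bookkeeping is sound up to the point where you feed $i=f\circ g$ into Proposition~\ref{Results-1}. That proposition, however, requires $i\colon C\to X$ to be an immersion, and here there is a genuine gap.

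Your assertion that injectivity of $di_y$ makes $i$ an immersion after shrinking $C$ is false in the Zariski topology. Inside your own setup, take $X=Y=\mathbb A^2_k$ (char $k\neq 2$), $D=\{u=0\}$, $f(s,t)=(s+t^2-t,\,s)$, $y=(0,0)$, and $C=\{s=0\}$; then $df_y$ is invertible and $C$ meets $E=f^{-1}(D)=\{s+t^2-t=0\}$ transversally at $y$, yet $i(t)=(t^2-t,0)$ identifies $t$ with $1-t$ and is therefore not injective on any Zariski neighbourhood of $0$. Injectivity of $di_y$ only buys unramifiedness; one can then pass to an \'etale neighbourhood of $i(y)$ in $X$ over which $i$ becomes a closed immersion, and since both sides of \eqref{mainineq} are insensitive to \'etale base change on $X$ (Proposition~\ref{projection}), the argument is salvageable---but not by the shrinking you propose. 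There is also a second obstruction you do not address: when $df$ vanishes identically along $E$ (e.g.\ $f$ the Frobenius of $X$), no line $\ell$ avoids $\ker(df_y)=\mathbb T_yY$, so $di_y$ is never injective and your construction cannot start. The paper removes both problems in one move: factor $f$ locally as $Y\hookrightarrow\mathbb A^n_X\xrightarrow{\pi} X$, dispose of the smooth projection $\pi$ by Proposition~\ref{projection}, and reduce to $f$ a closed immersion; then $f\circ g$ is automatically a closed immersion and Proposition~\ref{Results-1} applies without further argument.
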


\subsection{}\label{notsctd}
Let $S$ be an irreducible $k$-scheme, $f:X\rightarrow S$ a smooth $k$-morphism, $\{D_i\}_{i\in I}$ a set of effective Cartier divisors on $X$ relative to $S$, $D$ the sum of all $D_i$ $(i\in I)$, $U$ the complements of $D$ in $X$ and $j:U\rightarrow X$ the canonical injection. For every $i\in I$, we assume that $D_i$ is irreducible as the associated closed subscheme of $X$ and that $f|_{D_i}:D_i\rightarrow S$ is smooth. For any point $t\in S$, we denote  by $\rho_t:X_t\rightarrow X$ the canonical injection. Let $\mathscr F$ be a locally constant and constructible sheaf of $\Lambda$-modules on $U$.

\begin{theorem}\label{thscdiv}
Under the notation and assumptions of \ref{notsctd}, there exists an open dense subset $V\subseteq S$ such that:
\begin{itemize}
\item[(1)]
for any point $s\in V$, we have $\rho_s^*(\mathrm{GDT}_f(j_!\mathscr F))=\mathrm{DT}_{X_{s}}(j_!\mathscr F|_{X_{s}})$;
\item[(2)]
for any point $t\in S-V$, we have $\rho_t^*(\mathrm{GDT}_f(j_!\mathscr F))\geq\mathrm{DT}_{X_{t}}(j_!\mathscr F|_{X_{t}})$.
\end{itemize}
\end{theorem}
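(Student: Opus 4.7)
The plan is to reduce the statement to a componentwise comparison of total dimensions, deriving part~(2) from theorem~\ref{mainthmResults} (via de~Jong alterations) and part~(1) from theorem~\ref{themdelignelaumon} together with Saito's equality~\eqref{pullbacktocurveundernonchar} applied to a well-chosen relative smooth curve over $S$. Write
\[
m_i := \mathrm{dimtot}_{D_{i,\eta}}(j_!\mathscr F|_{X_\eta}), \qquad n_Z := \mathrm{dimtot}_Z(j_!\mathscr F|_{X_{\bar t}})
\]
for $t\in S$, a geometric point $\bar t\to t$ and an irreducible component $Z$ of $D_{i,\bar t}$. Both divisors in the theorem are supported on $D_t$, and equality of the two divisors translates into the equalities $n_Z=m_i$ for all such $(i,Z)$; part~(2) is the corresponding inequality. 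Since closed points are dense in $S$ and the quantities at stake are constructible, I may restrict both verifications to closed points of $S$, where $k(t)=k$ and $X_t$ is automatically smooth over $k$.

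For part~(2), I invoke de~Jong's alteration theorem to find a smooth irreducible $k$-scheme $\tilde S$ with a proper, generically finite and dominant $k$-morphism $\pi\colon\tilde S\to S$. Setting $\tilde X:=X\times_S\tilde S$ gives a smooth $k$-scheme; the pullback $\tilde X\times_X D$ is Cartier on $\tilde X$ since $D$ is relative Cartier over $S$ (see~\ref{fiberschdiv}), and it is reduced because each of its components is smooth over $\tilde S$. For a closed point $\tilde t\in\tilde S$ above a closed $t\in S$, $k(\tilde t)=k=k(t)$, so $\tilde X_{\tilde t}=X_t$ and $\pi'\circ\rho_{\tilde t}=\rho_t$ with $\pi'\colon\tilde X\to X$. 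Theorem~\ref{mainthmResults} applied to $\rho_{\tilde t}\colon\tilde X_{\tilde t}\hookrightarrow\tilde X$ yields
\[
\rho_{\tilde t}^*\mathrm{DT}_{\tilde X}(j_!\mathscr F|_{\tilde X})\geq\mathrm{DT}_{X_t}(j_!\mathscr F|_{X_t}).
\]
Since $\tilde X$ is smooth over $k$, $\mathrm{DT}_{\tilde X}=\mathrm{GDT}_{\tilde f}$, and by~\eqref{finitepbgdt}, $\mathrm{GDT}_{\tilde f}=\pi'^*\mathrm{GDT}_f$. The displayed inequality thus rewrites as $\rho_t^*\mathrm{GDT}_f\geq\mathrm{DT}_{X_t}$, proving part~(2).

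For part~(1), I shrink $S$ to its smooth locus so that $X$ becomes smooth over $k$ and, for each $i\in I$, construct a smooth relative curve $g_i\colon Y_i\hookrightarrow X$ over $S$ meeting $D_i$ along a section $\sigma_i\colon S\to Y_i$ of intersection multiplicity one, disjoint from $D_j$ for $j\neq i$, and such that the geometric generic fibre $Y_{i,\bar\eta}$ is $SS(j_!\mathscr F|_{X_{\bar\eta}})$-transversal and has non-degenerate ramification at $\sigma_i(\bar\eta)$. Such $Y_i$ is obtained by a Bertini-type argument in $X_{\bar\eta}$ followed by flat spreading out, possibly after a finite cover of $S$ (allowed by~\eqref{finitepbgdt}). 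Let $\varphi_i\colon S\to\mathbb Z$ be the Deligne--Laumon function of theorem~\ref{themdelignelaumon} for $g_i$. Saito's equality~\eqref{pullbacktocurveundernonchar} at $\eta$ gives $\varphi_i(\eta)=m_i$, and proposition~\ref{Results-1} applied fibrewise together with part~(2) yields $\varphi_i(t)\leq n_Z\leq m_i$ for every closed $t$ and the component $Z$ of $D_{i,\bar t}$ through $\sigma_i(\bar t)$. Lower semi-continuity of $\varphi_i$ then makes $V_i:=\{\varphi_i\geq m_i\}$ open and dense in $S$, and on $V_i$ one obtains $n_Z=m_i$ for that component. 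Repeating this construction with curves designed to hit each geometric component of $D_{i,\bar t}$ for $t$ in a dense open (or, equivalently, with multi-section curves arising from a Bertini family rich enough to cover every component), and taking $V$ as the intersection over $i$ and over all chosen components, yields the equality of divisors on $V$.

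The main technical obstacle is the Bertini-type construction of the relative curves $Y_i$ in part~(1): one must simultaneously secure the three conditions (transversality to $D$, $SS$-transversality and non-degeneracy) on the geometric generic fibre, spread the curve flatly over (a finite cover of) $S$, and ensure that every geometric component of $D_{i,\bar t}$ meets some such curve over a dense open of $S$. A related but more routine technical point in part~(2) is the use of an alteration~$\tilde S\to S$, for which the compatibility~\eqref{finitepbgdt} is essential to convert the inequality on the smooth alteration $\tilde X$ back to the possibly singular $X$.
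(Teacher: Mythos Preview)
Your approach is correct, and for part~(2) it is more economical than the paper's. For part~(1) you and the paper argue identically: after an \'etale cover of $S$ making each $D_i$ have geometrically irreducible fibres (the paper's proposition~\ref{genetosp1}, which absorbs your ``repeating this construction for each component''), one chooses a curve in $X_{\bar\eta}$ meeting $D_{i,\bar\eta}$ transversally at an $SS$-transversal non-degenerate point, spreads it to a relative curve over $S$, and combines Saito's equality~\eqref{pullbacktocurveundernonchar} on the generic fibre, proposition~\ref{Results-1} on each closed fibre, and Deligne--Laumon's theorem~\ref{themdelignelaumon} (this is exactly the paper's proposition~\ref{thge} and proposition~\ref{geineqprop2}). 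The genuine difference is part~(2): you observe that once $\tilde S$ is smooth, $\tilde X$ is a smooth $k$-scheme and $\mathrm{DT}_{\tilde X}=\mathrm{GDT}_{\tilde f}$ (the generic point of each irreducible component of $\tilde D$ lies over $\tilde\eta$, so the two local computations coincide), whence a single application of theorem~\ref{mainthmResults} to the closed immersion $X_t\hookrightarrow\tilde X$ gives the inequality at once. The paper's proposition~\ref{geineqprop1} instead re-runs a relative-curve construction centred at the \emph{closed} fibre and appeals once more to Deligne--Laumon --- a legitimate but longer route that does not exploit the already-established theorem~\ref{mainthmResults}. One point to tighten: your reduction to closed points via ``constructibility'' is not self-contained, since constructibility of the fibrewise total dimension is part of what is being proved; the paper passes from closed to arbitrary points by restricting to $\overline{\{t\}}$ and invoking corollary~\ref{compcycle}, and you should do the same.
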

This theorem generalizes Deligne and Laumon's result \cite[2.1.1]{lau} (cf. theorem \ref{themdelignelaumon}) in the geometric case.

\section{Geometric preliminaries}

\begin{proposition}[cf. {\cite[I, Chapitre 0, 15.1.16]{EGA4}}]\label{flatlemma}
Let $A\rightarrow B$ be a local homomorphism of noetherian local rings, $\kappa$ the residue field of $A$, $b$ an element of the maximal ideal of $B$ and $\bar b\in B\times_A\kappa$ the residue class of $b$. Then, the following conditions are equivalent:
\begin{itemize}
\item[(1)]
The quotient $B/bB$ is flat over $A$ and $b$ is a non-zero divisor of $B$;
\item[(2)]
$B$ is flat over $A$ and $\bar b$ is a non-zero divisor of $B\otimes_A\kappa$.
\end{itemize}
\end{proposition}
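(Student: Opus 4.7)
The proposition is a form of the local flatness criterion combined with a ``flatness $+$ regular element'' lemma. Our proof will combine three ingredients: the long exact sequence of $\mathrm{Tor}^A_\bullet(\kappa,-)$ applied to short exact sequences coming from multiplication by $b$ on $B$; the local flatness criterion in the form ``a noetherian local $A$-algebra $R$ is flat over $A$ if and only if $\mathrm{Tor}_1^A(\kappa,R)=0$''; and Nakayama's lemma. Noetherianity of $A$ produces a resolution of $\kappa$ by finite free $A$-modules, and noetherianity of $B$ then ensures that every Tor module appearing below is finitely generated over $B$, so that Nakayama is applicable.

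For $(1)\Rightarrow(2)$, we start from the short exact sequence $0\to B\xrightarrow{\cdot b}B\to B/bB\to 0$ (afforded by the hypothesis that $b$ is a non-zero divisor in $B$) and apply $\kappa\otimes_A -$. Flatness of $B/bB$ kills $\mathrm{Tor}_1^A(\kappa,B/bB)$, and from the resulting long exact sequence one reads off simultaneously that $\bar b$ acts injectively on $B\otimes_A\kappa$ --- which is the desired non-zero-divisor statement --- and that multiplication by $b$ is surjective on the finitely generated $B$-module $\mathrm{Tor}_1^A(\kappa,B)$. Since $b\in\mathfrak m_B$, Nakayama's lemma forces $\mathrm{Tor}_1^A(\kappa,B)=0$, and the local flatness criterion then gives that $B$ is flat over $A$.

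For $(2)\Rightarrow(1)$, set $K=\{x\in B:bx=0\}$ and split the four-term exact sequence $0\to K\to B\xrightarrow{\cdot b}B\to B/bB\to 0$ into two short exact sequences $0\to K\to B\xrightarrow{\cdot b}bB\to 0$ and $0\to bB\hookrightarrow B\to B/bB\to 0$. After applying $\kappa\otimes_A -$ and using $\mathrm{Tor}_i^A(\kappa,B)=0$ for $i\geq 1$ (from flatness of $B$), multiplication by $\bar b$ on $B\otimes_A\kappa$ factors as a composition $B\otimes_A\kappa\twoheadrightarrow bB\otimes_A\kappa\hookrightarrow B\otimes_A\kappa$. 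Its injectivity (the hypothesis on $\bar b$) forces the surjection to be an isomorphism and the inclusion to be injective; the latter yields $\mathrm{Tor}_1^A(\kappa,B/bB)=0$, so that $B/bB$ is flat over $A$ by the local flatness criterion. Flatness of $B/bB$ then gives $\mathrm{Tor}_2^A(\kappa,B/bB)=0$, hence $\mathrm{Tor}_1^A(\kappa,bB)=0$, which combined with the isomorphism above forces $K\otimes_A\kappa=0$. As $K$ is an ideal of the noetherian ring $B$, Nakayama's lemma concludes $K=0$, i.e., $b$ is a non-zero divisor of $B$.

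The one step in the argument that goes beyond routine bookkeeping is the short bootstrap in $(2)\Rightarrow(1)$: the hypothesis on $\bar b$ is first used to obtain the flatness of $B/bB$, and that flatness is then reused to compute a second Tor group whose vanishing is what finally lets Nakayama conclude $K=0$.
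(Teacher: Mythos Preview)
Your proof is correct. The paper, however, does not give an independent argument: it simply observes that the statement is the equivalence of conditions (b) and (c) in \cite[I, Chapitre 0, 15.1.16]{EGA4} and cites that reference. Your write-up is therefore a genuinely self-contained proof of a result the authors were content to quote. The underlying mathematics is the same---the EGA lemma is itself established via the local flatness criterion and Tor computations---so there is no conceptual divergence; what your version buys is independence from the reference, at the cost of a page of bookkeeping. The bootstrap you flag in $(2)\Rightarrow(1)$ (first deduce flatness of $B/bB$, then reuse it to kill $\mathrm{Tor}_1^A(\kappa,bB)$ and conclude $K\otimes_A\kappa=0$) is exactly the kind of step hidden inside the EGA proof, and your handling of it is clean. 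One small point worth making explicit for the reader: the Nakayama step at the end uses that $\mathfrak m_A B\subseteq\mathfrak m_B$ (since $A\to B$ is local), so $K\otimes_A\kappa=0$ forces $K=\mathfrak m_A K\subseteq\mathfrak m_B K$ and hence $K=0$.
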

It is deduced by the equivalence of (b) and (c) of \cite[I, Chaptire 0, 15.1.16]{EGA4}.

\begin{proposition}[{\cite[IV, 18.12.1]{EGA4}}]\label{zarmain}
Let $S$ and $D$ be schemes, $f:D\rightarrow S$ a separated morphism locally of finite type, $x$ a point of $D$ and $s=f(x)$. We assume that $x$ is an isolated point of $f^{-1}(s)$. Then, there exists an \'etale morphism $S'\rightarrow S$, a point $x'$ of $D'=D\times_SS'$ above $x$ and a Zariski open and closed neighborhood $V'$ of $x'\in D'$ such that  $V'$ is finite over $S'$ and $f'^{-1}(f'(x'))\cap V'=\{x'\}$, where $f':D'\rightarrow S'$ denotes the base change of $f:D\rightarrow S$.
\end{proposition}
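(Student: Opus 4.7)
The plan is to combine Zariski's Main Theorem in Grothendieck's form with the étale-local structure of finite morphisms over the base. The strategy proceeds in three steps: first shrink $D$ so that $x$ becomes the only preimage of $s$; then apply Zariski's Main Theorem to embed (the shrunken) $D$ as an open subscheme of something finite over $S$; finally make an étale base change on $S$ to isolate the connected piece containing $x$.

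First, since $\{x\}$ is open in $f^{-1}(s)$ by hypothesis, the definition of the subspace topology produces an open $V_0\subseteq D$ with $V_0\cap f^{-1}(s)=\{x\}$; after replacing $D$ by $V_0$ I may assume $f^{-1}(s)=\{x\}$, and in particular $f$ is quasi-finite at $x$. Zariski's Main Theorem (cf.\ \cite[IV, 8.12.6]{EGA4}), applied in a sufficiently small open neighborhood $U$ of $x$, then yields a factorization $f|_U=g\circ \iota$ where $\iota\colon U\hookrightarrow Y$ is an open immersion and $g\colon Y\rightarrow S$ is finite. After replacing $D$ by $U$, I may regard $D$ as an open subscheme of $Y$ with $f=g|_D$.

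Second, the fiber $g^{-1}(s)$ is a finite set of the form $\{x,y_1,\ldots,y_n\}$ with $y_1,\ldots,y_n$ outside $D$ (by the reduction step). The étale-local decomposition of a finite morphism at a point (cf.\ \cite[IV, 18.5.11]{EGA4}) furnishes an étale morphism $\pi\colon S'\rightarrow S$ together with a point $s'\in S'$ above $s$ with residue field $k(s')=k(s)$ such that, after possibly shrinking $S'$ around $s'$, the base change $Y':=Y\times_SS'$ splits as a disjoint union of open-and-closed subschemes, each finite over $S'$ and meeting the fiber $g'^{-1}(s')$ in a single point. Since $k(s')=k(s)$ one has $Y'_{s'}=Y_s$ canonically, providing a lift $x'\in Y'$ of $x$; let $W'\subseteq Y'$ denote the piece containing $x'$, so $W'$ is open and closed in $Y'$, finite over $S'$, with $g'^{-1}(s')\cap W'=\{x'\}$.

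Finally, it remains to arrange $W'\subseteq D':=D\times_SS'$. The subset $W'\setminus D'$ is closed in $W'$, and since $g'|_{W'}$ is finite and hence closed, its image $T\subseteq S'$ is closed; as $x'\in D'$ is the unique point of $W'$ above $s'$, we have $s'\notin T$, so replacing $S'$ by $S'\setminus T$ yields $W'\subseteq D'$. Then $V':=W'$ is open in $D'$ (being open in $Y'\supseteq D'$), closed in $D'$ (being closed in $Y'$), finite over $S'$, and satisfies $f'^{-1}(f'(x'))\cap V'=\{x'\}$, as required. The main conceptual obstacle is invoking the correct form of Zariski's Main Theorem together with the étale-local splitting of finite morphisms; the topological cleanup at the end is routine.
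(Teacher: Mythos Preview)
The paper does not give its own proof of this proposition: it is stated with a direct citation to \cite[IV, 18.12.1]{EGA4} and no argument is supplied. Your sketch is a correct outline of the standard proof (and is essentially how EGA itself proceeds): reduce to a quasi-finite situation, invoke Zariski's Main Theorem to compactify to a finite morphism, then use the \'etale-local splitting of finite morphisms to isolate the component through $x$, and finally shrink $S'$ to push the finite piece back inside $D'$. One small caution: the version of Zariski's Main Theorem you cite (\cite[IV, 8.12.6]{EGA4}) is stated under noetherian or finite-presentation hypotheses, while the proposition only assumes $f$ separated and locally of finite type; in full generality one either passes through a standard limit argument or appeals directly to the formulation in \cite[IV, 18.12]{EGA4}, but this does not affect the structure of your argument.
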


\begin{proposition}\label{transversalcurve}
Let $\kappa$ be a field with infinitely many elements, $X$ a connected smooth $\kappa$-scheme of dimension $n\geq 2$, $D$ an effective Cartier divisor on $X$ which is smooth at a $\kappa$-rational point $x\in D$ and $S\subseteq \mathbb T^*_xX$ a closed conical subset of dimension $1$. Then, we can find a smooth $\kappa$-curve $C$ and an immersion $h:C\rightarrow X$ such that $C$ intersects $D$ transversally at $x$ and that $\ker(dh)\cap S=\{0\}$, where $dh:\mathbb T^*_xX\rightarrow\mathbb T^*_xC$ is the canonical map.
\end{proposition}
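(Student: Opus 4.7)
The plan is to first use the low dimension of $S$ to reduce the problem to choosing a $\kappa$-rational tangent vector in $\mathbb T_xX$ subject to finitely many open linear conditions, and then to realize this vector as the tangent direction of an honest algebraic curve $C\hookrightarrow X$ by pulling back a straight line under étale local coordinates adapted to $D$.

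For the choice of tangent direction: since $S$ is a closed conical subset of dimension one in the $n$-dimensional $\kappa$-vector space $\mathbb T^*_xX$, its projectivization in $\mathbb P(\mathbb T^*_xX)$ is zero-dimensional, so the base change $S\otimes_\kappa\bar\kappa$ is a finite union of lines $\bar\kappa\omega_1,\ldots,\bar\kappa\omega_N$ through the origin for some nonzero $\omega_j\in\mathbb T^*_xX\otimes\bar\kappa$. For any line $L=\kappa v\subseteq\mathbb T_xX$, the orthogonal $L^\perp\subseteq\mathbb T^*_xX$ meets $S$ only at zero if and only if $\omega_j(v)\ne 0$ for every $j$. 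Each such inequality defines a proper $\kappa$-linear subspace of $\mathbb T_xX$, since the nonzero $\bar\kappa$-functional $\omega_j$ cannot vanish on any $\kappa$-basis of $\mathbb T_xX$. Together with the transversality requirement $v\notin\mathbb T_xD$, which is one further proper subspace, this gives finitely many proper $\kappa$-subspaces of $\mathbb T_xX$; since $\kappa$ has infinitely many elements, their union cannot exhaust $\mathbb T_xX(\kappa)$, and a suitable $\kappa$-rational $v$ exists.

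For the construction of the curve: since $x$ is a $\kappa$-rational smooth point of $X$ and $D$ is smooth at $x$, we choose a regular system of parameters $t_1,\ldots,t_n\in\mathscr O_{X,x}$ with $t_1$ a local equation for $D$. Shrinking $X$ to an affine open neighborhood of $x$, these functions define an étale morphism $\varphi:X\to\mathbb A^n_\kappa$ sending $x$ to the origin. Set $(a_1,\ldots,a_n)=d\varphi_x(v)$; then $v\notin\mathbb T_xD=\ker(dt_1)$ forces $a_1\ne 0$. Let $\ell\subseteq\mathbb A^n_\kappa$ be the parametric line $s\mapsto(sa_1,\ldots,sa_n)$, and let $C$ be the connected component of $\varphi^{-1}(\ell)$ containing $x$. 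As $\varphi^{-1}(\ell)\to\ell$ is étale and $\ell$ is a smooth $\kappa$-curve, $\varphi^{-1}(\ell)$ is smooth of pure dimension one, so $C$ is a smooth integral $\kappa$-curve and the canonical inclusion $h:C\hookrightarrow X$ is a locally closed immersion. Étaleness of $\varphi$ at $x$ identifies $\mathbb T_xC$ with $\kappa v=L$, hence $\ker(dh_x)=L^\perp$ meets $S$ only at zero by the previous paragraph, and $\mathbb T_xC\cap\mathbb T_xD=0$ follows from $a_1\ne 0$, giving the required transversality.

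The main mild obstacle lies in the first step when $\kappa$ is not algebraically closed: the irreducible components of $S$ need not be $\kappa$-lines, so one has to pass to $\bar\kappa$ to split $S$ into individual lines before formulating the avoidance problem, and then check that the resulting $\bar\kappa$-hyperplane conditions still cut out proper $\kappa$-subspaces of $\mathbb T_xX$ so that infinitude of $\kappa$ applies. Once the tangent direction is chosen, the construction of $C$ as the preimage of a line under étale coordinates is entirely routine.
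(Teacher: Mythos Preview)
Your proof is correct and follows the same strategy as the paper: choose a tangent direction $v\in\mathbb T_xX$ avoiding $\mathbb T_xD$ and the hyperplanes dual to the lines comprising $S$, then construct a smooth curve through $x$ with that tangent direction. Your handling of non-algebraically-closed $\kappa$ (passing to $\bar\kappa$ to split $S$ into lines and checking that the resulting conditions still cut out proper $\kappa$-subspaces) is more explicit than the paper's, and your curve construction via \'etale pullback of a line is a minor variant of the paper's direct construction of $C$ as the zero locus of lifts $t_1,\ldots,t_{n-1}\in\mathfrak m_x$ of a basis of $v^\perp\subseteq\mathbb T^*_xX$.
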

\begin{proof}
Since $S$ is a closed conical subset of dimension $1$ of $\mathbb T^*_xX$, it is a union of finite many $1$-dimensional vector spaces $\bigcup_{i\in I}L_i\subseteq \mathbb T^*_x X$. We denote by $L^{\vee}_i\subseteq\mathbb T_xX$ the dual hyperplane of $L_i$. The divisor $D$ is smooth at $x\in X$, then the stalk of the tangent bundle $\mathbb T_xD$ is a hyperplane of $\mathbb T_xX$. Since $\kappa$ has infinitely many elements, we can find a non-zero vector $\lambda\in\mathbb T_xX-((\bigcup_{i\in I}L^{\vee}_i)\bigcup\mathbb T_xD)$.

We denote by $H_{\lambda}\subseteq \mathbb T^*_x X$ the dual hyperplane corresponding to $\lambda \in\mathbb T_xX$ and by $\bar t_1,\cdots,\bar t_{n-1}$ generators of $H_{\lambda}$. Let $t_1,\cdots,t_{n-1}$ be liftings of $\bar t_1,\cdots,\bar t_{n-1}$ in the maximal idea $\mathfrak m_{x}$ of $\mathscr O_{X,x}$. Zariski locally on $x\in X$, we define a curve $C$ by the ideal $(t_1,\cdots, t_{n-1})$. It is smooth at $z$ and $\mathbb T_xC$ is spanned by $\lambda\in \mathbb T_xX$. We have $(C,D)_x=1$ since $\mathbb T_xC\nsubseteq\mathbb T_xD$. The relation $\mathbb T_xC\nsubseteq \bigcup_{i\in I}L^{\vee}_i$ implies that $\ker(dh)\cap S=\{0\}$.
\end{proof}

\begin{proposition}[cf. {\cite[III, 9.5.3]{EGA4}}]\label{genetosp2}
Let $S$ be an irreducible noetherian scheme, $g:D\rightarrow S$ a morphism of finite type, $\{D_i\}_{i\in I}$  the set of irreducible components of $D$. We assume that, for each $i\in I$, the restriction $f|_{D_i}:D_i\rightarrow S$ is surjective. Then there exists an open dense subset $V\subseteq S$ such that, for every point $v\in V$ and for any indices $i, j\in I$ $(i\neq j)$, the fibers $D_{i,v}$ and $D_{j,v}$ do not have common irreducible components.
\end{proposition}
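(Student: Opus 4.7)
The plan is to bound fiber dimensions via Chevalley's upper semi-continuity theorem, exploiting the fact that for $i \neq j$ the intersection $D_i \cap D_j$ has strictly smaller dimension than either $D_i$ or $D_j$ themselves. Since $D$ is noetherian, the index set $I$ is automatically finite, so only finitely many open dense subsets will need to be intersected.

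First, I equip each irreducible component $D_i$ with its reduced induced subscheme structure, so that $D_i$ is integral. Since $g|_{D_i} \colon D_i \to S$ is dominant and of finite type between irreducible noetherian schemes, I set $\delta_i := \dim D_i - \dim S \geq 0$. Combining generic flatness with Chevalley's theorem on fiber dimensions, there exists an open dense $U_i \subseteq S$ such that for every $s \in U_i$ the fiber $D_{i,s}$ is equidimensional of dimension $\delta_i$.

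Next, for each pair $i \neq j$, set $Z_{ij} := (D_i \cap D_j)_{\mathrm{red}}$. Since $D_i$ and $D_j$ are distinct irreducible components of $D$, neither is contained in the other, so $Z_{ij}$ is a proper closed subset of the irreducible schemes $D_i$ and $D_j$; consequently every irreducible component $Z_{ij,k}$ of $Z_{ij}$ satisfies
\begin{equation*}
\dim Z_{ij,k} < \min(\dim D_i,\ \dim D_j).
\end{equation*}
I apply Chevalley's theorem again to each $Z_{ij,k} \to S$: if this morphism fails to be dominant, its image lies in a proper closed subset of $S$, which I simply discard; otherwise there is an open dense subset of $S$ on which every fiber $Z_{ij,k,s}$ is equidimensional of dimension $\dim Z_{ij,k} - \dim S < \min(\delta_i, \delta_j)$.

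Finally, let $V \subseteq S$ be the intersection of the finitely many open dense subsets produced above. For $s \in V$, if $D_{i,s}$ and $D_{j,s}$ shared an irreducible component $W$, then equidimensionality of both fibers would force $\dim W = \delta_i = \delta_j$; on the other hand $W \subseteq D_{i,s} \cap D_{j,s} = Z_{ij,s}$ would place $W$ inside some $Z_{ij,k,s}$, giving $\dim W \leq \dim Z_{ij,k,s} < \min(\delta_i,\delta_j)$, a contradiction. I expect no real obstacle beyond bookkeeping: the only delicate step is to keep track of the finite collection of auxiliary subschemes $Z_{ij,k}$ and to correctly package the open dense subsets supplied by Chevalley's theorem for each of them.
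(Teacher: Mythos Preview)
Your argument via Chevalley's fiber-dimension theorem is sound, but it is not the route the paper takes. The paper reduces at once to two components $D_1,D_2$, sets $D_i^{\circ}=D_i\setminus(D_1\cap D_2)$, notes that the generic fiber $D_{i,\eta}$ is irreducible (it contains the generic point of $D_i$) so that $D_{i,\eta}^{\circ}\subseteq D_{i,\eta}$ is dense, and then applies \cite[III, 9.5.3]{EGA4} directly: density of an open subscheme in the generic fiber propagates to density in all fibers over an open $V\subseteq S$. Over such $V$, no irreducible component of $D_{i,v}$ can lie inside $(D_1\cap D_2)_v$, which is exactly the conclusion. This sidesteps all dimension counting.

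Your approach trades that single citation for explicit numerical control, which is a legitimate alternative. One point to tighten: for an arbitrary irreducible noetherian base $S$ the formula $\delta_i=\dim D_i-\dim S$ need not equal the generic fiber dimension, and likewise $\dim Z_{ij,k}-\dim S$ need not compute $\dim Z_{ij,k,\eta}$ (indeed $\dim D_i$ could even be infinite). The fix is painless: define $\delta_i:=\dim D_{i,\eta}$ and compare generic fibers directly, using that $Z_{ij,k,\eta}$ is a proper closed subscheme of the irreducible finite-type $k(\eta)$-scheme $D_{i,\eta}$, hence of strictly smaller dimension. With that adjustment your proof goes through in the stated generality.
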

\begin{proof}
Without loss of generality, we may assume that $D$ has only two irreducible components $D_1$ and $D_2$. We denote by $D_i^{\circ}$ $(i=1,2)$ the complement of $D_1\cap D_2$ in $D_i$. Let $\eta$ be the generic point of $S$. For each $i=1,2$, the canonical inclusion $D_{i,\eta}^{\circ}\subseteq D_{i,\eta}$ is dense since $D_{i,\eta}$ is irreducible. By \cite[III, 9.5.3]{EGA4}, there exists an open dense subset $V\subseteq S$, such that, for any point $v\in V$ and each $i=1,2$, the canonical inclusion $D^{\circ}_{i,v}\subseteq D_{i,v}$ is dense. It is equivalent to say that, for any $v\in V$, the fibers $D_{1,v}$ and $D_{2,v}$ do not have common irreducible components.
\end{proof}

\begin{proposition}[cf. {\cite[III, 9.7.7]{EGA4}}]\label{genetosp1}
Let $S$ and $D$ be integral $k$-schemes and $f:D\rightarrow S$ a smooth $k$-morphism. Then there exists an irreducible $k$-scheme $W$ and an \'etale map $h:W\rightarrow S$ such that,
\begin{itemize}
\item[(1)]
Each connected component of $D\times_SW$ is irreducible;
\item[(2)]
Every connected component of  $D\times_SW$ has geometrically irreducible fibers over $W$.
\end{itemize}
\end{proposition}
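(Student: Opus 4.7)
The plan is to reduce the problem to the generic fiber $D_\eta$ of $f$, produce a single finite separable extension $L$ of $k(S)$ that simultaneously splits every irreducible component of $D_\eta$ into geometrically irreducible pieces, spread $L$ out to an \'etale morphism over a dense open $V\subseteq S$, and finally invoke \cite[III, 9.7.7]{EGA4} to propagate the geometric irreducibility of components from the generic fiber to all fibers over a dense open of the base.

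More precisely, the smoothness of $f:D\to S$ forces $D_\eta$ to be smooth over $\eta=\mathrm{Spec}(k(S))$, hence geometrically regular; its finitely many irreducible components $Z_1,\dots,Z_r$ are therefore pairwise disjoint and each is integral. For each $i$, I would take the integral closure $L_i$ of $k(S)$ inside $k(Z_i)$, which is a finite separable extension---the ``field of constants'' of $Z_i$---and $Z_i$ is geometrically irreducible as an $L_i$-scheme. Choosing a single finite separable extension $L/k(S)$ that admits a $k(S)$-embedding of every $L_i$ (for instance a Galois closure of the compositum), each irreducible component of $D_\eta\times_\eta\mathrm{Spec}(L)$ becomes geometrically irreducible over $L$.

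Next, I would spread the algebra $L/k(S)$ out, by standard limit arguments, to an \'etale morphism $h_0:W_0\to V$ of finite type with $W_0$ irreducible and function field $L$, for some dense open $V\subseteq S$; the composition $W_0\to V\hookrightarrow S$ serves as the candidate \'etale map. After possibly shrinking $V$, proposition \ref{genetosp2} ensures that the irreducible components of $D\times_SW_0$ are pairwise disjoint and are the closures of the irreducible components of its generic fiber $D_\eta\times_\eta\mathrm{Spec}(L)$. Applying \cite[III, 9.7.7]{EGA4} to each such component---all smooth, hence flat of finite presentation, with geometrically irreducible generic fiber by construction---I obtain a further dense open $W\subseteq W_0$ over which every fiber of every connected component of $D\times_SW\to W$ is geometrically irreducible; this yields (2). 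Property (1) then comes for free: every connected component of $D\times_SW$ is smooth over the integral scheme $W$, hence normal, and a connected normal scheme is irreducible.

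The main obstacle is the bookkeeping in the spreading-out step: one must simultaneously spread out the splitting field $L$, the decomposition into components, and their disjointness, then verify that the geometric irreducibility of each component's generic fiber survives on a dense open of $W_0$. This is precisely what the combination of proposition \ref{genetosp2} and \cite[III, 9.7.7]{EGA4} delivers.
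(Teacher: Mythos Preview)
Your approach is the same as the paper's: produce a finite separable extension of $k(\eta)$ over which the components of the generic fiber become geometrically irreducible, spread this out to an \'etale cover of a dense open of $S$, and apply \cite[III, 9.7.7]{EGA4} component by component. The paper realizes the cover concretely as the normalization $S'$ of $S$ in $\eta'$ and then shrinks to a Zariski open $W'$ that is \emph{smooth over $k$} and \'etale over $S$.

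That last point is where your write-up slips. The implication ``smooth over the integral scheme $W$, hence normal'' is false in general (take $W$ a nodal curve and the identity morphism); you need $W$ itself to be regular so that $D\times_SW$, being smooth over $W$, is regular, and then connected components coincide with irreducible components. The paper arranges this explicitly by shrinking to the smooth locus of $S'$. Relatedly, your appeal to proposition~\ref{genetosp2} for the pairwise disjointness of the irreducible components of $D\times_SW_0$ is off target: that proposition only separates the \emph{fibers} of distinct components, not the components themselves as subschemes. Once $W_0$ is taken smooth over $k$, regularity of $D\times_SW_0$ gives the disjointness for free, and flatness of $D\times_SW_0\to W_0$ already identifies its irreducible components with the closures of those of the generic fiber, without recourse to proposition~\ref{genetosp2}. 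Both fixes are minor and leave your overall strategy intact.
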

\begin{proof}
We denote by $\eta$ the generic point of $S$. There exists a point $\eta'$ and a finite \'etale morphism $h_0:\eta'\rightarrow \eta$ such that every irreducible component of $D_{\eta'}=D\times_S\eta'$ is geometrically irreducible. We denote by $S'$ the normalization of $S$ in $\eta'$. Since the canonical map $D'=D\times_SS'\rightarrow S'$ is flat,  the image of the generic point of each irreducible component of $D'$ in $S'$ is $\eta'$, i.e., there is a one to one correspondence between the irreducible components of $D'$ and those of $D_{\eta'}$. After replacing $S'$ by a Zariski neighborhood $W'$ of $\eta'$, we may assume that $W'$ is a smooth $k$-scheme and \'etale over $S$. Then, each connected component of $D\times_SW'$ is a smooth $k$-scheme, particularly, is irreducible.
We denote by $\{E_r\}_{r\in J}$ the set of connected components of $D\times_SW'$.
 By \cite[III, 9.7.7]{EGA4}, for each $r\in J$, there exists an Zariski open dense subset $W_r\subset W'$ such that,  for every $w\in W_r$, the fiber $E_{r,w}$ is geometrically irreducible. We let $W$ be the intersection of all $W_r$ $(r\in J)$.
\end{proof}

\begin{corollary}\label{compcycle}
Let $S$ be an irreducible $k$-scheme, $f:X\rightarrow S$ a smooth $k$-morphism, $\{D_i\}_{i\in I}$ a set of effective Cartier divisors on $X$ relative to $S$, $D$ the sum of all $D_i$ $(i\in I)$. For every $i\in I$, we assume that $D_i$ is irreducible as the associated closed subscheme of $X$ and that $f|_{D_i}:D_i\rightarrow S$ is smooth.
 Let $R=\sum_{i\in I} r_i\cdot D_i$ be a Cartier divisor on $X$. If there exists an open dense subset $V\subseteq S$ such that, for each closed point $t\in V$, the fiber $R_t$ is an effective Cartier divisor on $X_t$ (\ref{fiberschdiv}), then $R$ is also effective.
\end{corollary}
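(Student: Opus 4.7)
The plan is to show directly that every coefficient $r_i$ is non-negative (which is equivalent to the Cartier divisor $R=\sum_i r_i D_i$ being effective), by restricting to a carefully chosen closed fiber and reading off Weil-divisor coefficients on the smooth scheme $X_v$.

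First I shrink $V$ so that the hypothesis propagates cleanly to a single fiber. Each morphism $f|_{D_i}\colon D_i\to S$ is smooth and $D_i$ is nonempty, so its image $U_i\subseteq S$ is a dense open subset (smooth morphisms are open, and nonempty opens in the irreducible noetherian $S$ are dense). Since the index set $I$ is finite, I intersect $V$ with all $U_i$ and may assume $D_{i,v}$ is nonempty for every $v\in V$ and every $i\in I$. Then I apply proposition \ref{genetosp2} to $f|_D\colon D\to S$, whose irreducible components are exactly the $D_i$, and shrink $V$ once more so that for every $v\in V$ and every $i\neq j$ the fibers $D_{i,v}$ and $D_{j,v}$ share no common irreducible component.

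Next I pick a closed point $v\in V$, which exists because $S$ is of finite type over the algebraically closed field $k$, so closed points are dense in every nonempty open subset. By hypothesis, $R_v$ is an effective Cartier divisor on the smooth $k(v)$-scheme $X_v$. Since $f|_{D_i}$ is smooth, each fiber $D_{i,v}$ is smooth of codimension one in $X_v$, hence reduced, and its associated Weil divisor is $\sum_k [D_{i,v,k}]$ with $D_{i,v,k}$ the irreducible components, each with multiplicity one. Passage from Cartier to Weil divisors on the smooth scheme $X_v$ is additive, so
\begin{equation*}
[R_v] \;=\; \sum_{i\in I} r_i\cdot [D_{i,v}] \;=\; \sum_{i\in I}\sum_{k} r_i\cdot [D_{i,v,k}],
\end{equation*}
and by the previous shrinking the prime divisors $\{D_{i,v,k}\}_{i,k}$ are pairwise distinct. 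Effectivity of $R_v$ on the smooth scheme $X_v$ forces every coefficient in this Weil expansion to be non-negative, hence $r_i\geq 0$ for every $i$ such that $D_{i,v}$ is nonempty; by construction every $D_{i,v}$ is nonempty, so $r_i\geq 0$ for every $i\in I$ and $R$ is effective.

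I expect the only genuine obstacle to be the bookkeeping that forbids hidden cancellation between the various $D_{i,v}$ on the chosen fiber: without the conclusion of proposition \ref{genetosp2}, two components $D_i$ and $D_j$ could conspire on a fiber to produce an effective sum $r_i D_{i,v}+r_j D_{j,v}$ even if, say, $r_i<0$. Shrinking $V$ as above rules out precisely this phenomenon, after which the argument reduces to a direct coefficient comparison on a single smooth fiber.
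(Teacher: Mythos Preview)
Your argument is correct and takes a genuinely simpler route than the paper. The paper first invokes proposition~\ref{genetosp2} to reduce to the case where $D$ is irreducible, and then invokes proposition~\ref{genetosp1} to pass to an \'etale cover $W\to S$ over which $D$ has geometrically irreducible fibers; only after that reduction does it read off the coefficient on a single fiber. You bypass the \'etale cover entirely: your key observation is that smoothness of $D_i\to S$ already forces each fiber $D_{i,v}$ to be reduced, so its associated Weil divisor on the smooth scheme $X_v$ is a sum of distinct prime divisors with multiplicity one. Once proposition~\ref{genetosp2} guarantees that the components of $D_{i,v}$ and $D_{j,v}$ are disjoint for $i\neq j$, the coefficient of every prime component of $D_{i,v}$ in $[R_v]$ is exactly $r_i$, and effectivity of $R_v$ gives $r_i\geq 0$ directly. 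This is more elementary and avoids both the \'etale base change and the appeal to proposition~\ref{genetosp1}.

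One minor bookkeeping point: proposition~\ref{genetosp2} is stated under the hypothesis that each $f|_{D_i}\colon D_i\to S$ is surjective, not merely dominant. Since you have already noted that the image $U_i$ is open and dense, it is cleaner to replace $S$ by the open dense subset $\bigcap_i U_i$ (and $X$, $D_i$, $V$ by their preimages) before invoking the proposition, rather than only shrinking $V$. This does not affect the coefficients $r_i$ and makes the citation literally applicable.
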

\begin{proof}
We may assume that $S$ is integral. By proposition \ref{genetosp2}, this problem can be  reduced to the case where $D$ is irreducible. Let $h:W\rightarrow S$ be an \'etale morphism and we denote by $h_W:X_W=X\times_SW\rightarrow X$ its base change. Then $R$ is effective if and only if
$h^*_WR$ is effective. Hence, replacing $S$ by an \'etale neighborhood, we may assume that, for any point $s\in S$, the fiber $D_s$ is geometrically irreducible (proposition \ref{genetosp1}). In this case, for any closed point $s\in S$ such that $D_s\neq \emptyset$, the coefficient of $D_s$ in $R_s$ equals the coefficient of $D$ in $R$.
\end{proof}

\begin{proposition}[{\cite[Proposition 3.8]{wr}}]\label{projection}
Let $X$ be a smooth $k$-scheme, $D$ a reduced Cartier divisor on $X$, $U$ the complement of $D$ in $X$, $j:U\rightarrow X$ the canonical injection and $\mathscr F$ a locally constant and constructible sheaf of $\Lambda$-modules on $U$. Then for any smooth morphism $\pi: Y\ra X$, we have
 \begin{equation}
    \pi^*(\mathrm{DT}_X(j_!\sF))=\mathrm{DT}_Y (\pi^*j_!\sF).
  \end{equation}
\end{proposition}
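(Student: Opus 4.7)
The plan is to match the coefficient of each irreducible component of $\pi^{-1}(D)$ on both sides of the claimed equality, reducing the problem to a local statement on complete discretely valued fields, and then invoking the compatibility of the Abbes--Saito ramification filtration with formally smooth extensions of such fields.

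First, because $\pi\colon Y\to X$ is smooth and $D$ is a reduced effective Cartier divisor, the pullback $\pi^{-1}(D)$ is again a reduced effective Cartier divisor on $Y$. I decompose $D=\sum_{i\in I}D_i$ into irreducible components and write $\pi^{-1}(D_i)=\sum_{j\in J_i}E_{ij}$ with $E_{ij}$ irreducible, so that $\pi^{-1}(D)=\sum_{i,j}E_{ij}$, each $E_{ij}$ appearing with multiplicity one. Then $\pi^{*}\mathrm{DT}_X(j_!\mathscr F)$ has coefficient $\mathrm{dimtot}_{D_i}(j_!\mathscr F)$ along $E_{ij}$, while $\mathrm{DT}_Y(\pi^{*}j_!\mathscr F)$ has coefficient $\mathrm{dimtot}_{E_{ij}}(\pi^{*}j_!\mathscr F)$ along $E_{ij}$. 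The claimed identity is therefore equivalent to the pointwise equalities
\begin{equation*}
\mathrm{dimtot}_{D_i}(j_!\mathscr F)=\mathrm{dimtot}_{E_{ij}}(\pi^{*}j_!\mathscr F)\qquad\text{for every pair }(i,j).
\end{equation*}

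Next, I localize at the generic points. Fix $i,j$, let $\xi_i$ and $\zeta_{ij}$ be the respective generic points and $\bar\xi_i$, $\bar\zeta_{ij}$ algebraic geometric points above them. The smooth morphism $\pi$ induces a formally smooth local homomorphism of strict henselizations $\mathscr O_{X,(\bar\xi_i)}\to\mathscr O_{Y,(\bar\zeta_{ij})}$. Passing to the discrete valuation rings obtained from these strict localizations, taking completions and fraction fields, yields an extension of complete discretely valued fields $K_i\to K'_{ij}$. Since $E_{ij}$ appears with multiplicity one in $\pi^{-1}(D_i)$, a uniformizer of $\mathscr O_{K_i}$ remains a uniformizer of $\mathscr O_{K'_{ij}}$, and the residue field extension $F_i\to F'_{ij}$ is formally smooth. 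By functoriality, the Galois module $\pi^{*}\mathscr F|_{\eta'_{ij}}$ attached to $\pi^{*}\mathscr F$ is the restriction along the natural map $G_{K'_{ij}}\to G_{K_i}$ of the Galois module $\mathscr F|_{\eta_i}$ attached to $\mathscr F$.

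The heart of the argument is that this restriction preserves the slope decomposition \eqref{slopedecom}. Specifically, for the extension $K_i\to K'_{ij}$ above, the Abbes--Saito ramification filtration is compatible in the sense that $G_{K'_{ij}}\to G_{K_i}$ carries $G^r_{K'_{ij}}$ into $G^r_{K_i}$ and identifies the slope subspaces of any finite continuous $\Lambda$-representation with wild inertia action through a finite quotient. Granting this, the summations in \eqref{dtofmod} agree term by term, giving the desired equality of total dimensions and completing the proof. The main obstacle is precisely this local ramification-theoretic input: it is where the smoothness of $\pi$ (as opposed to flatness alone) is used, because only smoothness ensures that the induced residue field extension $F_i\to F'_{ij}$ is formally smooth, which is the condition making the Abbes--Saito slope decomposition transfer. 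This local compatibility is the substantive content of \cite[Proposition 3.8]{wr}; once it is granted, the remainder of the proof is a bookkeeping exercise based on the definition \eqref{gedtdefinition}.
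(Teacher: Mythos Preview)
The paper does not give its own proof of this proposition; it is simply quoted from Saito \cite[Proposition 3.8]{wr} without further argument. Your outline is a correct unpacking of why the result holds and is essentially the route one would expect Saito's proof to take: smoothness of $\pi$ guarantees that at each pair of generic points $(\xi_i,\zeta_{ij})$ the induced extension of complete discrete valuation fields has ramification index $1$ and separable (equivalently, formally smooth) residue extension, and under such an extension the Abbes--Saito ramification filtration is preserved, so the slope decomposition and hence $\mathrm{dimtot}$ transfer verbatim. One small wording issue: in your final paragraph you invoke \cite[Proposition 3.8]{wr} for the local compatibility, but that reference \emph{is} the proposition you are proving, so as written the argument is circular; the local input you actually need is the compatibility of the filtration $G^r_K$ with unramified base change with separable residue extension, which is established in the Abbes--Saito papers \cite{as1,as2} (and recalled in \cite{wr} prior to Proposition 3.8). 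Once that is cited correctly, the rest of your bookkeeping is sound.
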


\section{Bounding the pull-back of the total dimension divisor over a curve}\label{smoothcurvecase}
\subsection{}\label{genot}
In this section,  we prove proposition \ref{Results-1}.  We take the notation and assumptions of \ref{xdf} and proposition \ref{Results-1}.

\subsection{}\label{smf}
Proposition \ref{Results-1} is a local statement for the Zariski topology on $X$. We assume that $X$ is connected of dimension $n\geq 2$. Choose a regular system of parameters $t_1,\cdots, t_n$ of $\mathscr O_{X,x}$ such that the ideal $(t_1,\cdots,t_{n-1})$ is the kernel of canonical map $i^*:\mathscr O_{X,x}\rightarrow\mathscr O_{C,x}$. We defined a map $f_x:k[T_1,\dots, T_{n-1}]\rightarrow \mathscr O_{X,x}$ by
\begin{equation*}
f_x\colon k[T_1,\dots, T_{n-1}]\rightarrow \mathscr O_{X,x},\ \ \ T_r\mapsto t_r.
\end{equation*}
In a Zariski neighborhood of $x\in X$, it induces a $k$-morphism  $f:X\ra \bA^{n-1}_k$ which is smooth at $x$. Since $C$ intersects $D$ properly at $x$, the restriction $f|_D:D\rightarrow \mathbb A^{n-1}_k$ is flat and quasi-finite at $x\in X$ (proposition \ref{flatlemma}). After shrinking $X$, we may assume that the map $f$ is smooth and the restriction $f|_D:D\ra\bA^{n-1}_k$ is flat and quasi-finite. Notice that $C=f^{-1}(O)$ where $O\in\mathbb A^{n-1}_k$ denotes the origin.

\subsection{}
By \cite[IV, 18.12.1]{EGA4} (cf. proposition \ref{zarmain}), there exists a connected \'etale neighborhood $\rho:V\rightarrow \mathbb A^{n-1}_k$ of $O$ and a point $x'\in D_V=D\times_{\mathbb A^{n-1}_k}V$ above $x$ such that $D_V$ is a disjoint union of two schemes such that one of them, denoted by $R$, is finite and flat over $V$ and $x'$ is the unique point in the intersection $f_V^{-1}(O')\cap R$, where $f_V:X_V\rightarrow V$ denotes the base change of $f:X\rightarrow \mathbb A^{n-1}_k$ and $O'=f_V(x')$. After replacing $X_V$ by a Zariski neighborhood $W$ of $x'$, we may assume that $R=D_V\cap W$. We denote by $g:W\rightarrow V$ the composition of the canonical injection $W\rightarrow X_V$ and $f_V:X_V\rightarrow V$. We have the following commutative diagram
\begin{equation}
\xymatrix{\relax
R\ar[d]\ar[r]\ar@{}|-(0.5){\Box}[rd]&W\ar[r]^g\ar[d]^{\pi}&V\ar[d]^{\rho}\\
D\ar[r]&X\ar[r]_-(0.5)f&\mathbb A^{n-1}_k}
\end{equation}
where the left square is Cartesian.

Let $Z\subseteq R$ be a closed subset of codimension $1$ in $R$ such that $R-Z$ is smooth over $\mathrm{Spec}(k)$ and the ramification of $\mathscr F|_W$ along $R-Z$ is non-degenerate (\ref{recallnd}). Since $g|_R:R\rightarrow V$ is finite and flat, the image $g(Z)$ is a closed subset of $V$, which is not dense in $V$. Then there exists a smooth $k$-curve $B$ and an immersion $\iota:B\rightarrow V$ such that $O'\in \iota(B)$ and that $\iota(B)\cap g(Z)\subseteq\{O'\}$.

\subsection{}
For any closed point $b\in B-\{O'\}$, we have \cite[Proposition 3.9]{wr} (cf. \eqref{saitopbineq})
\begin{equation}\label{ineq-main}
\deg(\mathrm{DT}_{W}( j_!\sF|_{W}), g^{-1}(b))\geq\sum_{y\in(R\cap g^{-1}(b))_{\mathrm{red}}}\mathrm{dimtot}_y(j_!\mathscr F|_{g^{-1}(b)}).
\end{equation}
By Deligne and Laumon's lower semi-continuity of Swan conductors \cite[2.1.1]{lau}, the function
\begin{equation*}
F\colon B \rightarrow \mathbb Z,\ \ \ b\mapsto \sum_{y\in(R\cap g^{-1}(b))_{\mathrm{red}}}\mathrm{dimtot}_y(j_!\mathscr F|_{g^{-1}(b)}).
\end{equation*}
is lower semi-continuous. Meanwhile, the function
\begin{equation*}
G:B\rightarrow \mathbb Z,\ \ \ b\mapsto \deg(\mathrm{DT}_{W}( j_!\sF|_{W}), g^{-1}(b))
\end{equation*}
is constant, since $\mathrm{DT}_{W}( j_!\sF|_{W})$ is a divisor on $W$ supported on $R$ and $g|_R:R\rightarrow V$ is finite and flat. We deduce form \eqref{ineq-main} that $G(O')\geq F(O')$, i.e.,
\begin{equation}\label{smooth-eq-1}
(\mathrm{DT}_{W} (j_!\sF|_{W}), g^{ -1}(O'))_{x'}\geq \mathrm{dimtot}_{x'}(j_!\sF|_{g^{ -1}(O')})=\mathrm{dimtot}_x(j_!\sF|_{C}).
\end{equation}
By proposition \ref{projection} and the flat pull-back formula, we have
\begin{eqnarray}
(\mathrm{DT}_{W} (j_!\sF|_{W}), g^{-1}(O'))_{x'} &=&(\pi^* (\mathrm{DT}_X (j_!\mathscr F)), \pi^*C)_{x'} \label{smooth-eq-3}\\
\nonumber &=&(\mathrm{DT}_X (j_! \mathscr F), C)_x.
\end{eqnarray}
By \eqref{smooth-eq-1} and \eqref{smooth-eq-3}, we obtain \eqref{mainineq}
\begin{equation*}
 (\mathrm{DT}_X (j_!\mathscr F), C)_x\geq \mathrm{dimtot}_x(j_!\mathscr F|_C).
\end{equation*}
\hfill$\Box$

\section{Bounding the pull-back of the total dimension divisor over an arbitrary variety}\label{generalcase}
\subsection{}
In this section, we prove theorem \ref{mainthmResults} and give a corollary. We take the notation and assumptions of \ref{xdf} and theorem \ref{mainthmResults}.

\subsection{}{\it proof of theorem \ref{mainthmResults}}.

This is a local problem for the Zariski topology of $Y$. We may assume that $f:Y\rightarrow X$ is the composition of a closed immersion $i:Y\rightarrow \mathbb A^n_X$ and the canonical projection $\pi:\mathbb A^n_X\rightarrow X$. By proposition \ref{projection}, we have
\begin{equation*}
\pi^*(\mathrm{DT}_X(j_!\sF))=\mathrm{DT}_{\mathbb A^n_X} (\pi^*j_!\sF).
\end{equation*}
Hence, we are reduced to the case where $f:Y\rightarrow X$ is a closed immersion.  The case $\dim_k Y=1$ is done in section \ref{smoothcurvecase}. We focus on the case where $\dim_kY\geq 2$.

After replacing $Y$ by a neighborhood of a generic point of $S=(Y\times_XD)_{\mathrm{red}}$, we may assume that $S$ is  irreducible and smooth over $\mathrm{Spec}(k)$ and that the ramification of $f^*j_!\sF$ along $S$ is non-degenerate (\ref{recallnd}).  Let $y$ be a closed point of $S$. The fiber $SS_y(f^\ast j_!\mathscr F)\in \mathbb T^*_y Y$ of the singular support of $f^*j_!\mathscr F$ is of dimension $1$. By proposition \ref{transversalcurve}, there exists a smooth $k$-curve $C$ and a closed immersion $i:C\rightarrow Y$ such that $C$ intersects $S$ transversally  at $y$ and that $i:C\rightarrow Y$ is $SS(\mathscr f^*j_!\mathscr F)$-transversal at $y$.

By \cite[Proposition 3.9]{wr} (cf. \eqref{pullbacktocurveundernonchar}), we have
\begin{equation}\label{cutbycurve0}
(\mathrm{DT}_Y (f^*j_!\sF),C)_y=\mathrm{dimtot}_y(j_!\sF|_C).
\end{equation}
We put $x=f(y)$. Consider $C$ as a curve in $X$ by the closed immersion $f\circ i:C\rightarrow X$. By proposition \ref{Results-1}, we have
\begin{equation}\label{cutbycurve1}
(\mathrm{DT}_X (j_!\sF),C)_x\geq\mathrm{dimtot}_x(j_!\sF|_C).
\end{equation}
Since $f$ is a closed immersion, we have
\begin{equation}\label{cutbycurve2}
(\mathrm{DT}_X (j_!\sF),C)_x=(f^*(\mathrm{DT}_X (j_!\sF)),C)_y.
\end{equation}
By \eqref{cutbycurve0}, \eqref{cutbycurve1} and \eqref{cutbycurve2}, we have
\begin{equation}\label{pbintersection}
(f^*(\mathrm{DT}_X (j_!\sF)),C)_y\geq (\mathrm{DT}_Y (f^*j_!\sF),C)_y.
\end{equation}
Since $C$ intersects $S$ transversally at $y$, the inequality \eqref{pbintersection} implies that
\begin{equation*}
f^*(\mathrm{DT}_X (j_!\sF))\geq \mathrm{DT}_Y (f^*j_!\sF).
\end{equation*}
\hfill$\Box$

\subsection{}\label{blowupnot}
Let $z$ be a closed point of $D$, $\widetilde X$ the blow-up of $X$ at $z$, $g:\widetilde X\rightarrow X$ the canonical projection, $\widetilde j:U\rightarrow \widetilde X$ the unique lifting of $j:U\rightarrow X$, $\{D_i\}_{i\in I}$ the set of irreducible components of $D$ that contains $z$, $\widetilde D_i$ $(i\in I)$ the strict transform of $D_i$ in $\widetilde X$ and $E=g^{-1}(z)$ the exceptional divisor on $\widetilde X$. We denote by $e_z (D_i)$ $(i\in I)$ the multiplicity of $D_i$ at $z$ \cite[4.3]{ful}.

\begin{corollary}\label{dtbth}
We have the following inequality (remark \ref{coefdt})
\begin{equation}\label{dimtotblow}
  \sum_{i\in I}e_z(D_i)\cdot\mathrm{dimtot}_{D_i}( j_!\mathscr F)\geq \mathrm{dimtot}_{E}(\widetilde j_!\mathscr F).
\end{equation}
\end{corollary}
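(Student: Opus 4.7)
The plan is to deduce this corollary as a direct consequence of theorem \ref{mainthmResults} applied to the blow-up morphism $g\colon \widetilde{X} \to X$, followed by comparing the coefficients of the exceptional divisor $E$ on the two sides of the resulting divisor inequality.

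First I verify the hypotheses of theorem \ref{mainthmResults}. Since $X$ is smooth over $k$ and $z$ is a closed (hence regular) point of $X$, the blow-up $\widetilde{X}$ is again a smooth $k$-scheme. The scheme-theoretic inverse image $\widetilde{X}\times_X D$ coincides with the total transform $g^*D$ of the effective Cartier divisor $D$ and is therefore itself an effective Cartier divisor on $\widetilde{X}$. Theorem \ref{mainthmResults} then yields
\begin{equation*}
g^*(\mathrm{DT}_X(j_!\mathscr{F})) \geq \mathrm{DT}_{\widetilde{X}}(\widetilde{j}_!\mathscr{F}).
\end{equation*}

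Next I extract the coefficient of $E$ on each side. By the standard blow-up formula, for every component $D_i$ passing through $z$ one has $g^*D_i = \widetilde{D}_i + e_z(D_i)\cdot E$, while for a component of $D$ not passing through $z$ the pull-back involves only its strict transform and not $E$. Writing $\mathrm{DT}_X(j_!\mathscr{F}) = \sum_i \mathrm{dimtot}_{D_i}(j_!\mathscr{F})\cdot D_i$ and applying $g^*$, the coefficient of $E$ on the left-hand side equals $\sum_{i\in I} e_z(D_i)\cdot \mathrm{dimtot}_{D_i}(j_!\mathscr{F})$. On the right-hand side, $E$ is an irreducible component of the reduced divisor supporting $\mathrm{DT}_{\widetilde{X}}(\widetilde{j}_!\mathscr{F})$, and its coefficient there is $\mathrm{dimtot}_E(\widetilde{j}_!\mathscr{F})$ by definition (cf. remark \ref{coefdt}). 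Comparing these two coefficients gives precisely \eqref{dimtotblow}.

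All the analytic content of the corollary is already packaged in theorem \ref{mainthmResults}; once that theorem is available, what remains is purely bookkeeping: the smoothness of the blow-up of a smooth point, the Cartier property of $g^*D$, and the multiplicity identity $g^*D_i = \widetilde{D}_i + e_z(D_i)\cdot E$. Consequently the only potential obstacle is making sure the hypotheses of theorem \ref{mainthmResults} are correctly verified for the morphism $g$ and the pulled-back divisor $g^*D$; there is no further difficulty to overcome after that.
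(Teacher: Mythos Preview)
Your proof is correct and follows essentially the same route as the paper: apply theorem \ref{mainthmResults} to $g\colon\widetilde X\to X$, then read off the coefficient of $E$ on both sides using $g^*D_i=\widetilde D_i+e_z(D_i)\cdot E$. If anything, you include a bit more justification (smoothness of the blow-up, the Cartier property of $g^*D$) than the paper does.
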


\begin{proof}
Applying theorem \ref{mainthmResults} to the morphism $g:\widetilde X\rightarrow X$, we have
\begin{equation}\label{inbu}
g^*(\mathrm{DT}_X(j_!\mathscr F))\geq\mathrm{DT}_{\widetilde X}(\widetilde j_!\mathscr F).
\end{equation}
Since $g^*D_i=\widetilde D_i+e_z (D_i)\cdot E$ \cite[Example 4.3.9]{ful}, the coefficient of $E$ in the left hand side of \eqref{inbu} is $\sum_{i\in I}e_z(D_i)\cdot\mathrm{dimtot}_{D_i}(j_!\mathscr F)$. The coefficient of $E$ in the right hand side of \eqref{inbu} is $ \mathrm{dimtot}_{E}(j_!\mathscr F)$. Hence, we get \eqref{dimtotblow}.
\end{proof}

\section{Semi-continuity of total dimension divisors}\label{sctd}
\subsection{}
In this section, we prove theorem \ref{thscdiv}. We take the notation and assumptions of \ref{notsctd}. If $f:X\rightarrow S$ is a relative curve, then theorem \ref{thscdiv} is just a consequence of  \cite[2.1.1]{lau} (cf. theorem \ref{themdelignelaumon}). We focus on the case where $\dim_k X-\dim_kS\geq 2$.

\begin{proposition}\label{geineqprop1}
For each closed point $s\in S$, we have
\begin{equation}\label{geineqform1}
\rho_s^*(\mathrm{GDT}_f(j_!\mathscr F))\geq\mathrm{DT}_{X_{s}}(j_!\mathscr F|_{X_{s}}).
\end{equation}
\end{proposition}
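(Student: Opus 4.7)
The plan is to apply theorem \ref{mainthmResults} to the closed immersion $\rho_s:X_s\hookrightarrow X$, then identify $\mathrm{GDT}_f(j_!\mathscr F)$ with $\mathrm{DT}_X(j_!\mathscr F)$ as divisors on $X$.

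First, I would check the hypotheses of theorem \ref{mainthmResults} for $\rho_s$. Since $f:X\rightarrow S$ is smooth and $s\in S$ is a closed point of the $k$-scheme $S$ of finite type (so $k(s)=k$), the fiber $X_s$ is smooth over $k$. Moreover, $X_s\times_X D=D_s$ is an effective Cartier divisor on $X_s$ because $D$ is an effective Cartier divisor on $X$ relative to $S$ (cf.\ \ref{fiberschdiv}). Theorem \ref{mainthmResults} then gives
\begin{equation*}
\rho_s^*(\mathrm{DT}_X(j_!\mathscr F))\geq \mathrm{DT}_{X_s}(j_!\mathscr F|_{X_s}).
\end{equation*}

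The second step is to prove the equality $\mathrm{GDT}_f(j_!\mathscr F)=\mathrm{DT}_X(j_!\mathscr F)$ on $X$. Both divisors are supported on $\sum_{i\in I} D_i$, and I would show that, for each $i\in I$, the two coefficients of $D_i$ are equal Abbes--Saito total dimensions of $\mathscr F|_{\eta_i}$. The coefficient in $\mathrm{DT}_X$ is computed in the strict Henselization of $\mathscr O_{X,\xi_i}$ (where $\xi_i$ is the generic point of $D_i$), while the coefficient in $\mathrm{GDT}_f$ is computed in the strict Henselization of $\mathscr O_{X_{\bar\eta},\zeta}$, where $\zeta$ is a generic point of a component of $D_{i,\bar\eta}$. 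Since $f|_{D_i}$ is flat, $\xi_i$ maps to $\eta$, and the local ring at $\zeta$ is obtained from $\mathscr O_{X,\xi_i}$ by an unramified extension of residue field, the uniformizer (a local equation of $D_i$) being unchanged. The AS total dimension is preserved under such unramified extensions of complete DVRs, so the two coefficients agree. To streamline the bookkeeping, I would first use proposition \ref{genetosp1} together with the base-change formula \eqref{finitepbgdt} to reduce to the case where each $D_{i,\eta}$ is geometrically irreducible; then $D_{i,\bar\eta}$ has a unique generic point and the identification is direct.

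Combining these two inputs yields the desired inequality. The main obstacle is the identification $\mathrm{DT}_X=\mathrm{GDT}_f$, which hinges on the invariance of Abbes--Saito's total dimension under residue-field extensions of the complete DVR at $\xi_i$---a technical but standard feature of the ramification theory. Once this invariance is granted, the proposition follows as a formal consequence of the already-established theorem \ref{mainthmResults}.
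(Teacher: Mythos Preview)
Your approach is genuinely different from the paper's and, if the gaps below are filled, considerably shorter. The paper does \emph{not} identify $\mathrm{GDT}_f(j_!\mathscr F)$ with $\mathrm{DT}_X(j_!\mathscr F)$. Instead, after the same de Jong reduction to $S$ smooth, it works fiberwise: at a closed point $s$ it chooses $z\in D_s$ where the ramification of $\mathscr F|_{X_s}$ is non-degenerate, picks a curve $C\subset X_s$ through $z$ that is $SS(j_!\mathscr F|_{X_s})$-transversal, and extends this to a smooth fibration $g:X\to\mathbb A^{n-1}_S$ of relative curves. It then combines Deligne--Laumon (theorem \ref{themdelignelaumon}) for $g$, proposition \ref{Results-1} applied on the geometric generic fiber $X_{\bar\eta}$, and the equality \eqref{pullbacktocurveundernonchar} on $X_s$, to compare coefficients. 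Your route trades this entire curve-and-fibration construction for a single application of theorem \ref{mainthmResults} plus the divisor identity $\mathrm{GDT}_f=\mathrm{DT}_X$.

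Two points need attention. First, theorem \ref{mainthmResults} requires the target $X$ to be a smooth $k$-scheme, whereas in \ref{notsctd} only $f$ is assumed smooth and $S$ is merely irreducible; you must first reduce to $S$ smooth (e.g.\ via de Jong, exactly as the paper does) before invoking it. Second, and more substantively, your identification $\mathrm{GDT}_f=\mathrm{DT}_X$ compares the total dimension at the strict henselization of $X$ at $\xi_i$ (residue field $k(\xi_i)^{sep}$) with that at the strict henselization of $X_{\bar\eta}$ at a point $\zeta$ over $\xi_i$ (residue field containing $\overline{k(\eta)}$). The induced extension of henselian DVRs has ramification index $1$, but the residue-field extension is in general \emph{purely inseparable} rather than separable: for instance $k(\eta)^{1/p}$ need not lie in $k(\xi_i)^{sep}$. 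Thus ``unramified'' in the \'etale sense does not apply, and neither proposition \ref{projection} nor \eqref{basechangetd} covers this situation. The invariance of the Abbes--Saito total dimension under extensions of complete DVRs with $e=1$ and arbitrary residue extension is true, but it requires a precise citation from the ramification literature; it is neither proved nor cited in this paper, which is presumably why the authors chose the direct argument instead.
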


\begin{proof}
We may assume that $S$ is integral. By de Jong's alteration \cite[4.1]{dej}, there exists a smooth $k$-scheme $S'$ and a proper, surjective and generically finite morphism $\alpha:S'\rightarrow S$. Let $f':X'\rightarrow S'$ be the base change of $f:X\rightarrow S$, $s\in S$ a closed point, $s'\in S'$ a closed point above $s$ and $\rho'_{s'}:X'_{s'}\rightarrow X'$ the canonical injection. Notice that $X'_{s'}\xrightarrow{\sim} X_s$.
By \eqref{finitepbgdt}, to show \eqref{geineqform1}, it is sufficient to show that
\begin{equation*}
\rho'^*_{s'}(\mathrm{GDT}_{f'}(j_!\mathscr F|_{X'}))\geq \mathrm{DT}_{X_{s'}}(j_!\mathscr F|_{X'_{s'}}).
\end{equation*}
Hence, we are reduced to the case where $S$ is smooth over $\mathrm{Spec}(k)$.

In the following, we fix a closed point $s\in S$. This is a local statement for the Zariski topology of $X$. After shrinking $X$, we may assume that $X$ is irreducible, that $(D_{s})_{\mathrm{red}}$ has a unique irreducible component and that, for each $i\in I$, $(D_s)_{\mathrm{red}}\subseteq D_i$. Notice that, for each $i\in I$, we have  $D_{i,s}\xrightarrow{\sim} (D_s)_{\mathrm{red}}$. We put  $n=\dim_k X-\dim_k S$.

Let $z\in D_s$ be a closed point such that the ramification of $j_!\mathscr F|_{X_s}$ along $(D_{s})_{\mathrm{red}}$ is non-degenerate at $z$ (\ref{recallnd}). We can find a smooth $k$-curve $C$ and a closed immersion $\iota:C\rightarrow X_s$ such that
the curve $C$ intersects $(D_{s})_{\mathrm{red}}$ transversally at $z$ and that the immersion $\iota:C\rightarrow X_s$ is $SS(j_!\mathscr F|_{X_s})$-transversal at $z$ (proposition \ref{transversalcurve}) .
Choose a regular system of parameters $\bar t_1,\bar t_2,\cdots, \bar t_n$ of $\mathscr O_{X_s,z}$, such that $(\bar t_1,\cdots,\bar t_{n-1})$ is the kernel of $\mathscr O_{X_s,z}\rightarrow \mathscr O_{C,z}$  and that $(\bar t_n)$ is the kernel of $\mathscr O_{X_s,z}\rightarrow \mathscr O_{(D_{s})_{\mathrm{red}},z}$. For each $1\leq r\leq n-1$, choose a lifting $t_r\in \mathscr O_{X,z}$ of $\bar t_r\in \mathscr O_{X_s,z}$. We define an
$\mathscr O_{S,s}$-homomorphism $ g_z:\mathscr O_{S,s}[T_1,\cdots,T_{n-1}]\rightarrow \mathscr O_{X, z}$ by

\begin{equation*}
g_z:\mathscr O_{S,s}[T_1,\cdots T_{n-1}]\rightarrow \mathscr O_{X, z},\ \ \ T_r\mapsto t_r.
\end{equation*}
After replacing $X$ by a Zariski neighborhood of $z$, the map $g_z$ induces an $S$-morphism $g:X\rightarrow  \mathbb A^{n-1}_S$. It satisfies the following conditions after shrinking further $X$
\begin{itemize}
\item[(i)]
it is smooth and of relative dimension $1$;
\item[(ii)]
for each $i\in I$, the restriction $g|_{D_i}:D_i\rightarrow \mathbb A^{n-1}_S$ is \'etale;
\item[(iii)]
the restriction $g|_{D}:D\rightarrow A^{n-1}_S$ is quasi-finite and flat (proposition \ref{flatlemma});
\item[(iv)]
the curve $C$ is the pre-image $g^{-1}(s\times O)$, where $s\times O$ denotes product of $s\in S$ and the origin $O\in\mathbb A^{n-1}_k$.
\end{itemize}

Let $\sigma:S\rightarrow \mathbb A^{n-1}_S$ be the zero-section of the canonical projection $\pi:\mathbb A^{n-1}_S\rightarrow S$. We denote by $h:Y\ra S$ the base change of $g:X\rightarrow  \mathbb A^{n-1}_S$ by $\sigma:S\rightarrow \mathbb A^{n-1}_S$ and we put $E=Y\times_X D$ and, for $i\in I$, put $E_i=Y\times_XD_i$. By proposition \ref{zarmain}, there exists a connected  \'etale neighborhood $\gamma:S'\ra S$ of $s\in S$ such that
\begin{itemize}
\item[(1)]
the pre-image $s'=\gamma^{-1}(s)$ is a point;
\item[(2)]
the fiber product $E'=E\times_SS'$ is a disjoint union of two schemes $Q'$ and $T'$,
\item[(3)]
the canonical map $Q'\rightarrow S'$ is finite and flat and $Q'_{s'}=\{z'\}$, where $z'$ denotes the unique pre-image of $z\in E_s$.
\end{itemize}
Since the restriction $h|_{E_i}:E_i\ra S$ is \'etale for each $i\in I$, we may also assmue that
\begin{itemize}
\item[(4)]
for each $i\in I$, the canonical map $Q'_i=E_i\times_EQ'\rightarrow S'$ is an isomorphism.
\end{itemize}
We have the following commutative diagram with Cartesian squares
\begin{equation*}
\xymatrix{\relax
Q'\ar[r]&E'\ar[r]\ar@{}|-(0.5){\Box}[rd]\ar[d]&E\ar@{}|-(0.5){\Box}[rd]\ar[d]\ar[r]&D\ar[d]&\\
&Y'\ar[d]_{h'}\ar[r]\ar@{}|-(0.5){\Box}[rd]&Y\ar[d]^{h}\ar[r]\ar@{}|-(0.5){\Box}[rd]&X\ar[d]^{g}&\\
&S'\ar[r]^{\gamma}&S\ar[r]^-(0.5){\sigma}&\mathbb A^{n-1}_S\ar[r]^-(0.5){\pi}&S}
\end{equation*}

Let $\eta$ be the generic point of $S$ and $\bar\eta$ an algebraic geometric point above $\eta$ that factors through $S'$. Recall that $X_{\bar\eta}=X\times_S\bar\eta$ (\ref{fiberschdiv}). We put $Y'_{\bar\eta}=Y'\times_{S'}\bar\eta$ and, for each $i\in I$, put $\bar\eta_i=Q'_i\times_{S'}\bar\eta$. Notice that $Y'_{\bar\eta}$ is a smooth $\bar\eta$-curve and $\bar\eta_i$ $(i\in I)$ are distinct closed points on $Y'_{\bar\eta}$.
By \cite[2.1.1]{lau} (cf. theorem \ref{themdelignelaumon}), we have
\begin{equation}\label{generalineq1}
\sum_{i\in I}\mathrm{dimtot}_{\bar\eta_i}(j_!\mathscr F|_{Y'_{\bar\eta}})\geq\mathrm{dimtot}_z (j_!\mathscr F|_{C}).
\end{equation}
 For every $i\in I$, we have (proposition \ref{Results-1})
\begin{equation}\label{gmiimply}
(\mathrm{DT}_{X_{\bar\eta}}(j_!\mathscr F|_{X_{\bar\eta}}), Y'_{\bar\eta})_{\bar\eta_i}\geq \mathrm{dimtot}_{\bar\eta_i}(j_!\mathscr F|_{Y'_{\bar\eta}}).
\end{equation}
Notice that, for each $i\in I$, the curve $Y'_{\bar\eta}$ intersects the divisor $D_{i,\bar\eta}$ transversally in $X_{\bar\eta}$ at the point $\bar\eta_i$.  Then $(\mathrm{DT}_{X_{\bar\eta}}(j_!\mathscr F|_{X_{\bar\eta}}), Y'_{\bar\eta})_{\bar\eta_i}$ is the coefficient of $D_{i,\eta}$ in the divisor  $\mathrm{DT}_{X_{\eta}}(j_!\mathscr F|_{X_{\eta}})$. It is also the coefficient of $D_i$ in the cycle $\mathrm{GDT}_f(j_!\mathscr F)$. By \eqref{gmiimply}, we have
\begin{equation}\label{generalineq2}
\mathrm{GDT}_f(j_!\mathscr F)\geq \sum_{i\in I}\mathrm{dimtot}_{\bar\eta_i}(j_!\mathscr F|_{Y'_{\bar\eta}})\cdot D_i.
\end{equation}
Since the immersion $\iota:C\rightarrow X_s$ is $SS(j_!\mathscr F|_{X_s})$-transversal at $z$, we have  \eqref{pullbacktocurveundernonchar}
\begin{equation*}
(\mathrm{DT}_{X_s}(j_!\mathscr F|_{X_s}), C)_z=\mathrm{dimtot}_z (j_!\mathscr F|_{C}),
\end{equation*}
which implies that
\begin{equation}\label{generalineq3}
\mathrm{DT}_{X_s}(j_!\mathscr F|_{X_s})=\mathrm{dimtot}_z (j_!\mathscr F|_{C})\cdot (D_s)_{\mathrm{red}},
\end{equation}
since $C$ intersects $(D_s)_{\mathrm{red}}$ transversally  in $X_s$ at $z$. By \eqref{generalineq1}, \eqref{generalineq2} and \eqref{generalineq3}, we have
\begin{eqnarray*}
\rho_{s}^*(\mathrm{GDT}_f(j_!\mathscr F))&\geq&\sum_{i\in I} \mathrm{dimtot}_{\bar\eta_i}(j_!\mathscr F|_{\bar\eta_i})\cdot D_{i,s}\\
&\geq&\mathrm{dimtot}_z (j_!\mathscr F|_{C})\cdot (D_s)_{\mathrm{red}} = \mathrm{DT}_{X_s}(j_!\mathscr F|_{X_s}).\\
\end{eqnarray*}
\end{proof}

Using a similar method, we prove the following proposition which implies that the total dimension number on the generic fiber reaches minimum  in an open dense subset of $S$. It is the key step of proving proposition \ref{geineqprop2}.

\begin{proposition}\label{thge}
Let $\eta$ be the generic point of $S$. We assume that, for each point $t\in S$, the fiber $D_t$ is geometrically irreducible. Then, there exists an open dense subset $V\subseteq S$, such that, for any point $t\in V$, we have
\begin{equation*}
\mathrm{dimtot}_{D_{\eta}}(j_!\mathscr F|_{X_{\eta}})\leq \mathrm{dimtot}_{D_{t}}(j_!\mathscr F|_{X_{t}}).
\end{equation*}

\end{proposition}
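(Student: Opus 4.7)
The plan is to mirror the strategy of Proposition \ref{geineqprop1}, but to place the test curve in the generic fibre of $f:X\to S$ rather than in a special fibre, and then run Deligne and Laumon's lower semi-continuity in the reverse direction. As in the proof of \ref{geineqprop1}, I first reduce to the case where $S$ is smooth over $k$ using de Jong's alteration together with \eqref{finitepbgdt}. Since every fibre $D_t$ is assumed geometrically irreducible, $D$ itself is a single irreducible relative Cartier divisor, and $\mathrm{GDT}_f(j_!\mathscr F) = \mathrm{dimtot}_{D_\eta}(j_!\mathscr F|_{X_\eta})\cdot D$. After shrinking $X$, the ramification of $j_!\mathscr F|_{X_\eta}$ along $D_\eta$ is non-degenerate outside a subset of codimension $\geq 2$ in $X_\eta$ (cf.\ \ref{recallnd}), so I can fix a closed point $z \in D_\eta$ at which non-degeneracy holds.

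Over the infinite field $k(\eta)$, Proposition \ref{transversalcurve} produces a smooth $k(\eta)$-curve $C$ immersed in $X_\eta$ which meets $D_\eta$ transversally at $z$ and is $SS(j_!\mathscr F|_{X_\eta})$-transversal at $z$. Combining \eqref{pullbacktocurveundernonchar} with the transversality of the intersection yields
\begin{equation*}
\mathrm{dimtot}_z(j_!\mathscr F|_C) \;=\; (\mathrm{DT}_{X_\eta}(j_!\mathscr F|_{X_\eta}),\,C)_z \;=\; \mathrm{dimtot}_{D_\eta}(j_!\mathscr F|_{X_\eta}),
\end{equation*}
the second equality using that $D_\eta$ is irreducible so that $\mathrm{DT}_{X_\eta}(j_!\mathscr F|_{X_\eta})$ is a scalar multiple of $D_\eta$.

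Next I spread $C$ out to a relative curve over $S$. Setting $n = \dim_k X - \dim_k S$, I pick $n-1$ local equations for $C$ in $X_\eta$, lift them to sections of $\mathscr O_X$ in a neighborhood of $z$, and obtain an $S$-morphism $g: X \to \mathbb A^{n-1}_S$ which, after shrinking $X$, is smooth of relative dimension one and whose restriction to $D$ is quasi-finite (using Proposition \ref{flatlemma}). Let $\sigma: S \to \mathbb A^{n-1}_S$ be the zero-section and put $Y = X \times_{\mathbb A^{n-1}_S} S$ via $\sigma$; then $Y\to S$ is smooth of relative dimension one with $Y_\eta = C$, and $E := Y\times_X D$ is generically finite over $S$ with $E_\eta = \{z\}$ of multiplicity one. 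By generic flatness and Proposition \ref{zarmain}, after replacing $S$ by an open dense subset $V$, the morphism $E \to V$ is finite and flat of constant degree one; hence for each $t\in V$ the fibre $Y_t$ meets $D_t$ transversally at a unique point $y_t$.

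Finally, I apply Deligne and Laumon's Theorem \ref{themdelignelaumon}(1) to the relative curve $Y\to V$ with sheaf $j_!\mathscr F|_Y$: the function $\phi(t) = \mathrm{dimtot}_{y_t}(j_!\mathscr F|_{Y_t})$ is lower semi-continuous on $V$, so $\phi(t) \geq \phi(\eta) = \mathrm{dimtot}_{D_\eta}(j_!\mathscr F|_{X_\eta})$ for every $t \in V$. On the other hand, Proposition \ref{Results-1} applied to the immersion $Y_t \hookrightarrow X_t$ at $y_t$, combined with the geometric irreducibility of $D_t$ and the transversality of $Y_t \cap D_t$, gives
\begin{equation*}
\mathrm{dimtot}_{D_t}(j_!\mathscr F|_{X_t}) \;=\; (\mathrm{DT}_{X_t}(j_!\mathscr F|_{X_t}),\,Y_t)_{y_t} \;\geq\; \mathrm{dimtot}_{y_t}(j_!\mathscr F|_{Y_t}),
\end{equation*}
and chaining the two inequalities yields the claim on $V$. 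The main obstacle I anticipate is the coordinated spreading-out: arranging that $g$ is smooth of relative dimension one, that $E \to V$ is finite flat of constant degree one, and that the geometric/SS-transversal conditions at $\eta$ persist in a sufficiently large Zariski neighborhood of $\eta$. These conditions should all be secured on a single open dense $V$ by the geometric preliminaries of \S5 (Propositions \ref{flatlemma}, \ref{zarmain}, and \ref{genetosp1}), mutatis mutandis as in the proof of Proposition \ref{geineqprop1}.
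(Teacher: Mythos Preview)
Your overall strategy is exactly the paper's: choose a test curve $C$ in the generic fibre realising $\mathrm{dimtot}_{D_\eta}$ as a curve-wise total dimension via \eqref{pullbacktocurveundernonchar}, spread it out to a relative curve $Y\to S$ by an $S$-morphism $g:X\to\mathbb A^{n-1}_S$, apply Theorem~\ref{themdelignelaumon} on $Y$, and then compare with the fibre-wise invariant using Proposition~\ref{Results-1}. There are, however, two genuine gaps.

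The decisive one is that you invoke lower semi-continuity in the wrong direction. Theorem~\ref{themdelignelaumon}(1) gives $\phi(t)\le\phi(\eta)$ for every specialisation $t$ of $\eta$: the curve-wise total dimension can only \emph{drop} under specialisation, and this is precisely how it is used in \eqref{generalineq1} and in the proof of Proposition~\ref{Results-1}. Hence your chain $\mathrm{dimtot}_{D_t}\ge\phi(t)\ge\phi(\eta)$ breaks at the second step. The paper instead uses the \emph{constructibility} half of Theorem~\ref{themdelignelaumon}(1): the function $\phi$ is constant on some open dense $V'$, so $\phi(t)=\phi(\eta)$ for $t\in V'$, and then Proposition~\ref{Results-1} yields $\mathrm{dimtot}_{D_t}\ge\phi(t)=\phi(\eta)=\mathrm{dimtot}_{D_\eta}$ on $V'$. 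Secondly, Proposition~\ref{transversalcurve} requires $z$ to be a $\kappa$-\emph{rational} point, and the non-degeneracy and singular-support apparatus of \ref{recallnd}--\ref{saitopbformdt} (in particular the equality \eqref{pullbacktocurveundernonchar}) is formulated over an algebraically closed base; a closed point of $D_\eta$ need not be $k(\eta)$-rational. The paper handles this by first picking $\bar z\in D_{\bar\eta}$ and then replacing $S$ by a flat generically finite cover (harmless by \eqref{basechangetd}) so that $\bar z$ descends to a $k(\eta)$-rational point, the $SS$-transversality of $C$ being verified after base change to $\bar\eta$.
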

\begin{proof}
This is a Zariski local problem at the generic point of $S$. We may assume that $S$ is smooth and that $X$ is irreducible. We put $n=\dim_k X-\dim_k S$. Let $T$ be a connected and smooth $k$-scheme and $\beta:T\ra S$ a flat and generically finite morphism. By \eqref{basechangetd}, to verify the proposition, it is enough to verify it after the base change by $\beta:T\rightarrow S$.

Let $\bar\eta\rightarrow S$ be an algebraic geometric point above $\eta$, $\bar z\in D_{\bar\eta}$ a closed point such that $D_{\bar\eta}$ is smooth at $\bar z$ and that the ramification of $\mathscr F|_{U_{\bar\eta}}$ along $D_{\bar\eta}$ is non-degenerate at $\bar z$. Since $S$ can be replaced by a flat and generically finite cover $T$ as above, we may assume that $\bar z$ can be descended to a $k(\eta)$-rational point $z\in D_{\eta}$. Since the function field $k(\eta)$ of $S$ has infinitely many elements, after shrinking $X$, we can find a smooth $k(\eta)$-curve $C$ and a closed immersion $\iota:C\ra X_{\eta}$ such that the curve $C$ intersects $D_{\eta}$ transversally at $z$ and that the base change $\iota_{\bar\eta}:C_{\bar\eta}\rightarrow X_{\bar\eta}$ of $\iota:C\ra X_{\eta}$ is $SS(j_!\mathscr F|_{X_{\bar\eta}})$-transversal at $\bar z$ (proposition \ref{transversalcurve}).

Choose a regular system of parameters $ t_1,t_2,\cdots, t_n$ of $\mathscr O_{X_{\eta},z}$ such that $(t_1,t_2,\cdots, t_{n-1})$ is the kernel of $\mathscr O_{X_{\eta},z}\rightarrow \mathscr O_{C,z}$
and that $( t_n)$ is the kernel of $ \mathscr O_{X,z}\rightarrow \mathscr O_{D,z}$. We define a $k(\eta)$-morphism $g_{\eta}:k(\eta)[T_1,T_2,\cdots T_{n-1}]\rightarrow \mathscr O_{X_{\eta},z}$ by
\begin{equation*}
g_{\eta}:k(\eta)[T_1,T_2,\cdots T_{n-1}]\rightarrow \mathscr O_{X,z},\ \ \  T_i\mapsto t_i.
\end{equation*}
After shrinking $X$ by a Zariski neighborhood of $z$ again, the map $g_{\eta}$ induces an $S$-morphism $g:X\rightarrow \mathbb A^{n-1}_S$ which satisfies the following conditions
\begin{itemize}
\item[(i)]
it is smooth and of relative dimension $1$;
\item[(ii)]
the restriction $g|_D:D\rightarrow \mathbb A^{n-1}_S$ is \'etale;
\item[(iii)]
the curve $C$ is the pre-image $g^{-1}(\eta\times O)$, where $\eta\times O$ denotes fiber product of $\eta\in S$ and the origin $O\in\mathbb A^{n-1}_k$.
\end{itemize}

Let $\sigma:S\rightarrow \mathbb A^{n-1}_S$ be the zero-section of the canonical projection $\pi:\mathbb A^{n-1}_S\rightarrow S$. We denote by $h:Y\rightarrow S$ the base change of $g:X\rightarrow \mathbb A^{n-1}_S$ by $\sigma:S\rightarrow \mathbb A^{n-1}_S$ and we put $E=Y\times_XD$. Since $h|_E:E\rightarrow S$ is \'etale, there exists a connected \'etale neighborhood $\gamma:S'\ra S$ of the geometric point $\bar\eta\rightarrow S$ such that
\begin{itemize}
\item[(1)]
the fiber product $E'=E\times_S S'$ is a disjoint union of $Q'$ and $P'$;
\item[(2)]
the canonical map $Q'\rightarrow S'$ is an isomorphism and $Q'$ contains a pre-image of $z\in E_{\eta}$.
\end{itemize}
We have the following diagram with Cartesian squares
\begin{equation*}
\xymatrix{\relax
Q'\ar[r]&E'\ar[r]\ar@{}|-(0.5){\Box}[rd]\ar[d]&E\ar@{}|-(0.5){\Box}[rd]\ar[d]\ar[r]&D\ar[d]&\\
&Y'\ar[d]_{h'}\ar[r]\ar@{}|-(0.5){\Box}[rd]&Y\ar[d]^{h}\ar[r]\ar@{}|-(0.5){\Box}[rd]&X\ar[d]^{g}&\\
&S'\ar[r]^{\gamma}&S\ar[r]^-(0.5){\sigma}&\mathbb A^{n-1}_S\ar[r]^-(0.5){\pi}&S}
\end{equation*}
We denote by $\eta'$ the generic point of $S'$. For any geometric point $\bar t'\rightarrow S'$, we put $Y'_{\bar t'}=Y'\times_{S'}\bar t'$, put $Q'_{\bar t'}=Q'\times_{S'} \bar t'$, put $X_{\bar t'}=X\times_S\bar t'$ and put $D_{\bar t'}=D\times_S\bar t'$. Notice that $Y'_{\bar t'}$ is a smooth $\bar t'$-curve and that $Q'_{\bar t'}\in Y'_{\bar t'}$ is a closed point.
 By \cite[2.1.1]{lau} (cf. theorem \ref{themdelignelaumon}), there exists an open dense subset $V'\subseteq S'$ such that, for any $t'\in V'$ and an algebraic geometric point $\bar t'\rightarrow V'$ above $t'$, we have
\begin{equation}\label{gefm1}
\mathrm{dimtot}_{ Q'_{\bar t'}}(j_!\mathscr F|_{Y'_{\bar t'}})=\mathrm{dimtot}_{\bar z}(j_!\mathscr F|_{C_{\bar\eta}}).
\end{equation}
Notice that $Y'_{\bar t'}$ intersects $D_{\bar t'}$ transversally  at $Q'_{\bar t'}$ in $X_{\bar t'}$. Put $t=\gamma(t')$.  We have (proposition \ref{Results-1})
\begin{equation}\label{gefm2}
\mathrm{dimtot}_{D_{t}}(j_!\mathscr F|_{X_{t}})=(\mathrm{DT}_{X_{\bar t'}}(j_!\mathscr F|_{X_{\bar t'}}),Y'_{\bar t'})_{Q'_{\bar t'}}\geq\mathrm{dimtot}_{Q'_{\bar t'}}(j_!\mathscr F|_{Y'_{\bar t'}}).
\end{equation}
By \cite[Corollary 3.9]{wr} (cf. \eqref{pullbacktocurveundernonchar}), we have
\begin{equation}\label{gefm3}
\mathrm{dimtot}_{D_{\eta}}(j_!\mathscr F|_{X_{\eta}})=(\mathrm{DT}_{X_{\bar\eta}}(j_!\sF|_{X_{\bar\eta}}), C_{\bar\eta})_{\bar z}=\mathrm{dimtot}_{\bar z}(j_!\mathscr F|_{C_{\bar\eta}}).
\end{equation}
Hence, by \eqref{gefm1}, \eqref{gefm2} and \eqref{gefm3}, we have $\mathrm{dimtot}_{D_{\eta}}(j_!\mathscr F|_{U_{\eta}})\leq\mathrm{dimtot}_{D_{t}}(j_!\mathscr F|_{U_{t}})$ for any point $t$ in the open dense subset $ \gamma(V')\subseteq S$.
\end{proof}

\begin{proposition}\label{geineqprop2}
There exists an open dense subset $V\subseteq S$ such that, for any point $t\in V$, we have
\begin{equation*}
\rho_t^*(\mathrm{GDT}_f(j_!\mathscr F))\leq \mathrm{DT}_{X_t}(j_!\mathscr F|_{X_t}).
\end{equation*}
\end{proposition}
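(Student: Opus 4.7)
The plan is to reduce the statement to Proposition~\ref{thge} applied to each individual divisor $D_i$ and then assemble. Both sides of the desired inequality are divisors on $X_t$ supported on $D_t$, so it suffices to compare coefficients along each irreducible component.

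First I would arrange that each $D_i$ has geometrically irreducible fibers and that the fibers $D_{i,t}$ of distinct $D_i$'s share no irreducible component on an open dense locus. By \eqref{finitepbgdt} and \eqref{basechangetd} the claim is étale-local on $S$, so Proposition~\ref{genetosp1} allows me to replace $S$ by a connected étale cover over which each $D_i\to S$ has geometrically irreducible fibers. Proposition~\ref{genetosp2} applied to $f|_D:D\to S$ then produces an open dense $V_0\subseteq S$ on which, for $t\in V_0$ and $i\neq j$, $D_{i,t}$ and $D_{j,t}$ share no irreducible component. For $t\in V_0$ the support of $D_t$ decomposes into the integral pieces $\{D_{i,t}\}_{i\in I}$, and $\rho_t^*D_i=D_{i,t}$ because $f|_{D_i}$ is smooth.

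Next I would apply Proposition~\ref{thge} one divisor at a time. Fix $i\in I$ and set $X^{(i)}=X\setminus\bigcup_{j\neq i}D_j$, a Zariski open of $X$ containing the generic point of $D_i$ and the generic point of every non-empty fiber $D_{i,t}$. The restricted data $(f|_{X^{(i)}},\,D_i\cap X^{(i)},\,\mathscr F|_{U\cap X^{(i)}})$ meets the hypotheses of Proposition~\ref{thge}: the single divisor $D_i\cap X^{(i)}$ has geometrically irreducible fibers over $S$ by the first paragraph, and the coefficients $\mathrm{dimtot}_{D_{i,\eta}}$ and $\mathrm{dimtot}_{D_{i,t}}$ are unchanged by the shrinking because they depend only on the germ of $\mathscr F$ at the generic points of these divisor fibers, which lie in $X^{(i)}$. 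Proposition~\ref{thge} then supplies an open dense $V_i\subseteq S$ with
\begin{equation*}
\mathrm{dimtot}_{D_{i,\eta}}(j_!\mathscr F|_{X_\eta})\leq \mathrm{dimtot}_{D_{i,t}}(j_!\mathscr F|_{X_t})\qquad (t\in V_i).
\end{equation*}

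To finish I would take $V=V_0\cap\bigcap_{i\in I}V_i$, which is open dense since $I$ is finite ($X$ being of finite type over $k$), and read off for $t\in V$ the equalities
\begin{equation*}
\rho_t^*(\mathrm{GDT}_f(j_!\mathscr F))=\sum_{i\in I}\mathrm{dimtot}_{D_{i,\eta}}(j_!\mathscr F|_{X_\eta})\cdot D_{i,t},\qquad \mathrm{DT}_{X_t}(j_!\mathscr F|_{X_t})=\sum_{i\in I}\mathrm{dimtot}_{D_{i,t}}(j_!\mathscr F|_{X_t})\cdot D_{i,t},
\end{equation*}
so the claim follows termwise from the previous bound. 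The main obstacle is the localization in the third paragraph: one must check carefully that restricting to $X^{(i)}$ preserves both total dimension numbers and that Proposition~\ref{thge} legitimately applies in the restricted single-divisor setup. Once this localization is justified, the rest is essentially bookkeeping over the finitely many components of $D$, exactly parallel to the proof of Proposition~\ref{geineqprop1}.
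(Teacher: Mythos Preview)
Your proposal is correct and follows essentially the same route as the paper. Both arguments reduce to Proposition~\ref{thge} via a combination of \'etale base change (Proposition~\ref{genetosp1}, to force geometrically irreducible fibres) and Zariski shrinking (Proposition~\ref{genetosp2}, to separate the $D_i$'s fibrewise); the paper performs the reduction to a single irreducible $D$ first and then base-changes, whereas you base-change first and then localize to $X^{(i)}$, but this is only a difference in the order of operations. One small point to tighten: the hypothesis of Proposition~\ref{thge} must hold at every point of the base, so you should replace $S$ by $V_0$ \emph{before} applying it to $(X^{(i)},D_i\cap X^{(i)})$, since outside $V_0$ the fibre $(D_i\cap X^{(i)})_t$ could be empty; this is harmless because you intersect with $V_0$ at the end anyway.
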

\begin{proof}
This is a Zariski local problem at the generic point of $S$. We may assume that, for each point $s\in S$ and any indices $i,r\in I$ ($i\neq r$), the fibers $D_{i,s}$ and $D_{r,s}$ have distinct irreducible components (proposition \ref{genetosp2}). Hence the proposition can be reduced to the case where $D$ is irreducible. We may assume that $S$ is integral. By proposition \ref{genetosp1}, there exists a connected smooth $k$-scheme $S'$ and an \'etale map $\gamma:S'\rightarrow S$ such that $D'=D\times_SS'$ is the disjoint union of its irreducible components and that every irreducible component of $D'$ has geometrically irreducible fibers at each point of $S'$.

We put $X'=X\times_SS'$ and denote by $f':X'\rightarrow S'$ the canonical projection. For every point $t'\in S'$, we put $X_{t'}=X\times_St'$. We denote by $\rho'_{t'}:X_{t'}\rightarrow X'$ the canonical injection, by $\gamma':X'\rightarrow X$ the base change of $\gamma:S'\rightarrow S$. We put $t=\gamma(t')$ and denote by $\gamma'_{t}:X_{t'}\rightarrow X_{t}$ the pull-back of $t'\rightarrow t$. We have the following commutative diagram
\begin{equation*}
\xymatrix{\relax
X_{t'}\ar[r]^{\gamma'_{t}}\ar[d]_{\rho'_{t'}}&X_{t}\ar[d]^{\rho_{t}}\\
X'\ar[r]_{\gamma'}&X}
\end{equation*}

There exists an open dense subset $V'\subseteq S'$ such that, for any point $t'\in V'$, we have (proposition \ref{thge})
\begin{equation}\label{generalgeineq}
\rho'^*_{t'}(\mathrm{GDT}_{f'}(j_!\mathscr F|_{X'}))\leq\mathrm{DT}_{X_{t'}}(j_!\mathscr F|_{X_{t'}}).
\end{equation}
By \eqref{basechangetd} and \eqref{finitepbgdt}, we have
\begin{eqnarray*}
\gamma'^*(\mathrm{GDT}_f(j_!\mathscr F))&=&\mathrm{GDT}_{f'}(j_!\mathscr F|_{X'}),\\
\gamma'^*_{t}(\mathrm{DT}_{X_{t}}(j_!\mathscr F|_{X_{t}}))&=&\mathrm{DT}_{X_{t'}}(j_!\mathscr F|_{X_{t'}}).
\end{eqnarray*}
Hence, \eqref{generalgeineq} implies that, for any $t'\in V'$,
\begin{equation}\label{ineqcomdiag}
\gamma'^*_t(\rho^*_t(\mathrm{GDT}_f(j_!\mathscr F)))=\rho'^*_{t'}(\gamma'^*(\mathrm{GDT}_f(j_!\mathscr F)))\leq \gamma'^*_{t}(\mathrm{DT}_{X_{t}}(j_!\mathscr F|_{X_{t}})).
\end{equation}
Since $\gamma'_{t}:X_{t'}\rightarrow X_{t}$ is surjective and \'etale, we deduce from \eqref{ineqcomdiag} that
\begin{equation*}
\rho^*_t(\mathrm{GDT}_f(\mathscr F))\leq \mathrm{DT}_{X_{t}}(j_!\mathscr F|_{X_{t}})
\end{equation*}
for each point $t\in \gamma(V')\subseteq S$.
\end{proof}

\subsection{}\label{closureoft}
Fix a non-closed point of $t\in S$. We denote by $T$ the smooth part of $\overline{\{t\}}$, which is an open dense subset of $\overline{\{t\}}$. We put $X_T=X\times_ST$ and denote by $f_T:X_T\rightarrow T$ the canonical projection, by $\rho_T:X_T\rightarrow X$ the canonical injection and, for any point $s\in T$, by $\iota_s:X_s\rightarrow X_T$ the base change of the inclusion $s\rightarrow T$. We have the following commutative diagram
\begin{equation*}
\xymatrix{\relax
X_s\ar[r]^-(0.5){\iota_s}\ar[rd]_-(0.5){\rho_s}&X_T\ar[d]^{\rho_T}\\
&X}
\end{equation*}

\subsection{}\label{final proof}{\it Proof of theorem \ref{thscdiv}.}

Step 1. By proposition \ref{geineqprop2}, there exists an open dense subset $V\subseteq S$ such that, for  each $t\in V$, we have $\rho_t^*(\mathrm{GDT}_f(j_!\mathscr F))\leq \mathrm{DT}_{X_t}(j_!\mathscr F|_{X_t})$. Meanwhile, for any closed point $s\in V$, we have $\rho_s^*(\mathrm{GDT}_f(j_!\mathscr F))\geq\mathrm{DT}_{X_s}(j_!\mathscr F|_{X_s})$ (proposition \ref{geineqprop1}). Hence, for any closed point $s\in V$, we have
\begin{equation}\label{eqclosedpoint}
\rho_s^*(\mathrm{GDT}_f(j_!\mathscr F))=\mathrm{DT}_{X_s}(j_!\mathscr F|_{X_s}).
\end{equation}

Step 2.  Let $t\in V$ be a non-closed point. We take the notation of \ref{closureoft}. By Step 1, for every closed point $s\in T\cap V$, we have
\begin{equation}\label{pbrf1}
\iota^*_s(\rho^*_T(\mathrm{GDT}_f(j_!\mathscr F)))=\rho_s^*(\mathrm{GDT}_f(j_!\mathscr F))=\mathrm{DT}_{X_s}(j_!\mathscr F|_{X_s}).
\end{equation}
Apply Step 1 to $(\mathscr F|_{U_T}, f_T:X_T\rightarrow T)$. There exists an open dense subset $W\subset T\cap V$ such that, for every closed point $s\in W$, we have
\begin{equation}\label{pbrf2}
\iota^*_s(\mathrm{GDT}_{f_T}(j_!\mathscr F|_{X_T}))=\mathrm{DT}_{X_s}(j_!\mathscr F|_{X_s}).
\end{equation}
Hence, for any closed point $s\in W$, we have (\eqref{pbrf1} and \eqref{pbrf2})
\begin{equation*}
\iota^*_s(\rho^*_T(\mathrm{GDT}_f(j_!\mathscr F)))=\iota^*_s(\mathrm{GDT}_{f_T}(j_!\mathscr F|_{X_T})).
\end{equation*}
By corollary \ref{compcycle}, we obtain that $\rho^*_T(\mathrm{GDT}_f(j_!\mathscr F))=\mathrm{GDT}_{f_T}(j_!\mathscr F|_{X_T})$. Applying $\iota^*_t$ to both sides of the equation, we get
\begin{equation*}
\rho_t^*(\mathrm{GDT}_f(j_!\mathscr F))=\mathrm{DT}_{X_t}(j_!\mathscr F|_{X_t}).
\end{equation*}
Hence \eqref{eqclosedpoint} is valid for any point of $V$, i.e., property (1) of theorem \ref{thscdiv} is proved.

Step 3. Let $t\in S-V$ be a point. If $t$ is closed, we have (proposition \ref{geineqprop1})
\begin{equation}\label{closedx-v}
\rho_t^*(\mathrm{GDT}_f(j_!\mathscr F))\geq \mathrm{DT}_{X_t}(j_!\mathscr F|_{X_t}).
\end{equation}
Assume that $t$ is not closed. We take the notation of \ref{closureoft}. Applying Step 1 to $(\mathscr F|_{U_T}, f_T:X_T\rightarrow T)$, there exists an open dense subset $W\subseteq T$ such that, for any closed point $s\in W$, we have
\begin{equation}\label{pbrf3}
\iota^*_s(\mathrm{GDT}_{f_T}(j_!\mathscr F|_{X_T}))=\mathrm{DT}_{X_s}(j_!\mathscr F|_{X_s}).
\end{equation}
By proposition \ref{geineqprop1}, for each $s\in W$, we have
\begin{equation}\label{pbrf4}
\iota^*_s(\rho^*_T(\mathrm{GDT}_f(j_!\mathscr F)))\geq\mathrm{DT}_{X_s}(j_!\mathscr F|_{X_s}).
\end{equation}
  By \eqref{pbrf3}, \eqref{pbrf4}  and corollary \ref{compcycle}, we get
\begin{equation*}
\rho^*_T(\mathrm{GDT}_f(j_!\mathscr F))\geq \mathrm{GDT}_{f_T}(j_!\mathscr F|_{X_T}).
\end{equation*}
Applying $\iota^*_t$ to both sides of the equation, we obtain again
\begin{equation*}
\rho_t^*(\mathrm{GDT}_f(j_!\mathscr F))\geq\mathrm{DT}_{X_t}(j_!\mathscr F|_{X_t}).
\end{equation*}
Thus, property (2) of theorem \ref{thscdiv} is proved.
\hfill$\Box$


\begin{thebibliography}{20}
\bibitem{as1}
A. Abbes and T. Saito, \emph{Ramification of local fields with imperfect residue fields}. Amer. J. Math. {\bf 124}, (2002), 879--920.

\bibitem{as2}
A. Abbes and T. Saito, \emph{Ramification of local fields with imperfect residue fields II}. Doc. Math. Extra Volume Kato,
(2003), 5--72.

\bibitem{beilinson15}
A. Beilinson, \emph{Constructible sheaves are holonomic}.  \href{http://arxiv.org/abs/1505.06768}{arXiv:1505.06768v5}.

\bibitem{ful}
W. Fulton, \emph{Intersection theory}. Springer-Verlag, Berlin, second edition, 1998.

\bibitem{EGA4}
A. Grothendieck and J. Dieudonn\'e, \emph{\'El\'ements de g\'eom\'etrie alg\'ebrique: IV \'Etude locale des sch\'emas et des morphismes de sch\'emas}. Publ. Math. Inst. Hautes Sci. {\bf 20}, (1964), 5--259; {\bf 24}, (1965), 5--231; {\bf 28}, (1966), 5--255; {\bf 32}, (1967), 5--361.


\bibitem{dej}
A.J. de Jong, \emph{Smoothness, semi-stability and alterations}. Publ. Math. Inst. Hautes \'Etudes Sci. {\bf 83}, (1996), 51--93.

\bibitem{katz}
N. Katz, \emph{Gauss sum, Kloostetman sum, and monodromy groups}. Annals of Mathematics studies {\bf 116},
Princeton University Press, Princeton, NJ, 1988.

\bibitem{lau}
G. Laumon, \emph{Semi-continuit\'e du conducteur de Swan (d'apr\`es Deligne)}. S\'eminaire E.N.S. (1978-1979) Expos\'e {\bf 9}, Ast\'erisque 82-83 (1981) 173-219.


\bibitem{wr}
T. Saito, \emph{Wild ramification and the contangent bundle}. To appear at Journal of Algebraic Geometry.

\bibitem{sur}
T. Saito, \emph{Characteristic cycle and the Euler number of a constructible sheaf on a surface}. Kodaira Centennial issue of the Journal of Mathematical Sciences, the University of Tokyo, (2015) 22: 387-442.


\bibitem{2015}
T. Saito, \emph{The characteristic cycle and the singular support of a constructible sheaf}.
\href{http://arxiv.org/abs/1510.03018}{arXiv:1510.03018}.

\bibitem{linrep}
J.P. Serre, \emph{Linear representations of finite groups}. Graduate Texts in Mathematics 42. Springer-Verlag, New York-Heidelberg, 1997.

\bibitem{xiao}
L. Xiao, \emph{On ramification filtrations and p-adic differential equations, I: equal characterisitc case}. Algebra and Number Theory, {\bf 4}, (2010), 969--1027.


\end{thebibliography}
\end{document}